\newtheorem{theorem}{Theorem}
\newtheorem{lemma}{Lemma}
\newtheorem{remark}{Remark}
\newtheorem{proposition}{Proposition}
\newtheorem{definition}{Definition}
\newtheorem{corollary}{Corollary}
\newcommand{\rd}{\,\mathrm{d}}
\newcommand{\bsx}{\boldsymbol{x}}
\newcommand{\bsy}{\boldsymbol{y}}
\newcommand{\bsl}{\boldsymbol{l}}
\newcommand{\bsk}{\boldsymbol{k}}
\newcommand{\bst}{\boldsymbol{t}}
\newcommand{\bssigma}{\boldsymbol{\sigma}}
\newcommand{\bszero}{\boldsymbol{0}}
\newcommand{\nat}{\mathbb{N}}
\newcommand{\LL}{{\cal L}}
\newcommand{\FF}{\mathbb{F}}
\newcommand{\NN}{\mathbb{N}}
\newcommand{\QQ}{\mathbb{Q}}
\newcommand{\Dcal}{\mathcal{D}}
\newcommand{\Jcal}{\mathcal{J}}
\newcommand{\Mod}{\operatorname{mod}}
\newcommand{\mypmod}[1]{\,(\Mod\,#1)}
\newcommand{\wal}{{\rm wal}}
\newcommand{\cP}{\mathcal{P}}
\newcommand{\cS}{\mathcal{S}}
\newenvironment{proof}{\begin{trivlist}
    \item[\hskip\labelsep{\it Proof.}]}{$\hfill\Box$\end{trivlist}}
\newcommand{\satop}[2]{\stackrel{\scriptstyle{#1}}{\scriptstyle{#2}}}
\begin{document}

\title{Optimal $\mathcal{L}_2$ discrepancy bounds for higher order digital sequences over the finite field $\mathbb{F}_2$}

\author{Josef Dick \thanks{The first author is supported by a Queen Elizabeth 2 Fellowship of the Australian Research Council.} and Friedrich Pillichshammer \thanks{The second author is supported by the Austrian Research Foundation (FWF), Project S9609.}}

\date{}

\maketitle

\begin{abstract}
We show that the $\LL_2$ discrepancy of the explicitly constructed infinite sequences of points $(\bsx_0,\bsx_1, \bsx_2, \ldots)$ over $\mathbb{F}_2$ introduced in [J. Dick, Walsh spaces containing smooth functions and quasi-Monte Carlo rules of arbitrary high order. SIAM J. Numer. Anal., {\bf 46}, 1519--1553, 2008] satisfies $$\mathcal{L}_{2,N}(\{\boldsymbol{x}_0,\boldsymbol{x}_1,\ldots, \boldsymbol{x}_{N-1}\}) \le C_s N^{-1} (\log N)^{s/2} \quad \mbox{for all } N \ge 2,$$ and $$\mathcal{L}_{2,2^m}(\{\boldsymbol{x}_0,\boldsymbol{x}_1,\ldots, \boldsymbol{x}_{2^m-1}\}) \le C_s 2^{-m} m^{(s-1)/2} \quad \mbox{for all } m \ge 1,$$ where $C_s > 0$ is a constant independent of $N$ and $m$. These results are best possible by lower bounds in [P.D. Proinov, On the $L^2$ discrepancy of some infinite sequences. Serdica, {\bf 11}, 3--12, 1985] and [K. F. Roth, On irregularities of distribution. Mathematika, {\bf 1}, 73--79, 1954]. Further, for every $N \ge 2$ we explicitly construct finite point sets $\{\boldsymbol{y}_0,
\ldots, \boldsymbol{y}_{N-1}\}$ in $[0,1)^s$ such that $$\mathcal{L}_{2,N}(\{\boldsymbol{y}_0,\boldsymbol{y}_1,\ldots, \boldsymbol{y}_{N-1}\}) \le C_s N^{-1} (\log N)^{(s-1)/2}.$$   Another solution for finite point sets by a different construction was previously shown in [W. W. L. Chen and M. M. Skriganov, Explicit constructions in the classical mean squares problem in irregularity of point distribution.  J. Reine Angew. Math., {\bf 545}, 67--95, 2002].
\end{abstract}

{\bf Keywords}: $\LL_2$ discrepancy, explicit constructions, digital sequence, higher order sequence, digital higher order sequence, higher order net, higher order digital net

{\bf AMS Subject Classification}: Primary: 11K38; Secondary: 11K06, 11K45, 65C05; %42C10;

\section{Introduction and statement of the main results}

We study equidistribution properties of point sets in the
$s$-dimensional unit-cube $[0,1)^s$ measured by their $\LL_2$ discrepancy (see \cite{BC, DP10, DT97,
kuinie,mat}). For a finite set $\cP_{N,s} =\{\bsx_0,\ldots
,\bsx_{N-1}\}$ of points in the $s$-dimensional unit-cube $[0,1)^s$
the local discrepancy function is defined as
$$\Delta(t_1,\ldots,t_s)=\frac{A_N([\bszero,\bst),\cP_{N,s})}{N}-t_1\cdots t_s,$$ where $\bst=(t_1,\ldots,t_s) \in [0,1]^s$ and
$A_N([\bszero,\bst), \cP_{N,s})$ denotes the number of indices $n$ with
$\bsx_n \in [0,t_1)\times \dots \times [0,t_s) =: [\bszero, \bst)$. The discrepancy function measures the difference of the portion of points in an axis parallel box containing the origin and the volume of this box. Hence it is a measure of the irregularity of distribution of a point set in $[0,1)^s$.

The {\it $\LL_2$ discrepancy} of $\cP_{N,s}$ is defined as
\begin{eqnarray}\label{L2-definition}
\LL_{2,N}(P_{N,s})=\left(\int_{[0,1]^{s}} |\Delta(\bst)|^2 \rd\bst\right)^{1/2}.
\end{eqnarray}
For an infinite sequence $\cS_s= (\bsx_0, \bsx_1, \ldots)$ in $[0,1)^s$ the $\LL_2$ discrepancy $\LL_{2,N}(\cS_s)$ is the $\LL_2$ discrepancy of the first $N$ elements of $\cS_s$.

It is well known that a sequence is uniformly distributed modulo one if and only if its $\LL_2$ discrepancy tends to zero for growing $N$. Furthermore, the $\LL_2$ discrepancy can also be linked to the integration error of a quasi-Monte Carlo rule, see, e.g. \cite{DP10,Nie73,SloWo} for the error in the worst-case setting and \cite{Woz} for the average case setting.

A lower bound on the $\LL_2$ discrepancy of {\it finite} point sets has been shown by Roth~\cite{Roth} which states that for any $s \in \NN$ (the set of positive integers) there exists a number $c_s> 0$ depending only on $s$, such that for every point set $\cP_{N,s}$ in $[0,1)^s$ consisting of $N \ge 2$ points we have
\begin{equation}\label{lbound_roth}
\mathcal{L}_2(\cP_{N,s}) \ge c_s \frac{(\log N)^{(s-1)/2}}{N}.
\end{equation}
This lower bound is best possible in the order of magnitude in $N$
as shown first by Davenport~\cite{dav} for $s=2$ and then by
Roth~\cite{roth2,Roth4} for arbitrary dimensions $s \in \NN$. Other constructions of point sets with optimal $\LL_2$ discrepancy were found by Chen~\cite{C80,C83}, Dobrovolski\v{i}~\cite{Do}, Frolov~\cite{Frolov} and Skriganov~\cite{S89, S94}. Davenport used point sets consisting of the $2N$ elements $(\{\pm n \alpha\},n/N)$ for $1 \le n \le N$, where $N \in \NN$ and $\alpha$ has a continued fraction expansion with
bounded partial quotients. Further examples of two-dimensional point
sets with best possible order of $\LL_2$ discrepancy can be found in
\cite{FauPi09a,FauPi09,FauPiPriSch09,KriPi2006,lp,pro1988a}. On
the other hand, Roth's~\cite{Roth4} proof for dimensions $s \ge 2$ is a pure existence result obtained by averaging arguments as are the constructions in \cite{C80, C83, Do, Frolov, S89, S94}. Explicit constructions of point sets achieving the best possible order of convergence have been a longstanding open problem. Finally, a solution was given by Chen and Skriganov~\cite{CS02} who, for every integer $N \ge 2$ and every dimension $s \in \NN$, gave for the first time explicit constructions of finite point sets consisting of $N$ points in $[0,1)^s$ whose $\LL_2$ discrepancy achieves an order of convergence of $(\log N)^{(s-1)/2}/N$. Their construction uses a finite field $\mathbb{F}_p$ of order $p$ with $p \ge 2 s^2$. We also refer to \cite{CS3} where the arguments from \cite{CS02} are considerably simplified and to the overview in \cite[Chapter~16]{DP10}. The result in \cite{CS02} was extended to the $\mathcal{L}_p$ discrepancy for $1 \le p < \infty$ by Skriganov~\cite{Skr}.

On the other hand, it was shown by Proinov~\cite{pro85} that for an infinite sequence $\cS_s$ of points in $[0,1)^s$ there is a constant $c'_s > 0$ such that
\begin{equation*}
\mathcal{L}_{2,N}(\cS_s) \ge c'_s \frac{(\log N)^{s/2}}{N}
\end{equation*}
for infinitely many values of $N$. This lower bound is known to be best possible in dimension $s=1$. One-dimensional infinite sequences whose $\LL_2$ discrepancy satisfies a bound of order $\sqrt{\log N}/N$ for every $N \ge 2$ were given in, e.g. \cite{chafa,g96,lp,pro85,pg}. These constructions are mainly based on the symmetrization of sequences (also called reflection principle). On the other hand, although it was widely believed that Proinov's lower bound is also best possible for arbitrary dimensions $s$, so far there was no proof for this assertion.

\subsection{The main results}

In this paper we prove two main results: We provide for the first time explicit constructions of {\it infinite} sequences in $[0,1)^s$ for which the first $N \ge 2$ points achieve a  $\LL_2$ discrepancy of order $(\log N)^ {s/2}/N$ for arbitrary $s \in \NN$. This result is best possible by the lower bound of Proinov~\cite{pro85}.

Furthermore, for any integer $N \ge 2$ and any dimension $s \in \NN$, we give an explicit construction of a finite point set of $N$ elements in the $s$-dimensional unit cube with the optimal rate of convergence for the $\LL_2$ discrepancy in the sense of the lower bound of Roth. Our construction is completely different from the construction of Chen and Skriganov \cite{CS02}. In contrast to \cite{CS02} where the construction uses a finite field $\FF_p$ with $p \ge 2s^2$ our method is, independent of the dimension $s$, based on the finite field $\mathbb{F}_2$ of order two. Furthermore, our result does not use the Davenport reflection principle \cite{dav} and also does not use the 'self-averaging' property from \cite{CS02}. Instead it is based on higher order digital nets and sequences from \cite{D07, D08}.

In our proofs we do not keep track of constants which depend only on the dimension $s$ since they are significantly larger than the constants obtained in \cite{DP05b}. Therefore, in the following, we write $A(N,s) \ll_s B(N,s)$ if there is a constant $c_s > 0$ which depends only on $s$ (and not on $N$ or $m$ through $N=2^m$) such that $A(N,s) \le c_s B(N,s)$.

\begin{theorem}\label{thm1}
For any $s \in \NN$ one can explicitly construct an infinite sequence $\cS_s$ of points in $[0,1)^s$ such that for all $N \ge 2$ we have
\begin{equation*}
\mathcal{L}_{2,N}(\cS_s) \ll_s  \frac{(\log N)^{(s-1)/2}}{N} \sqrt{S(N)} \ll_s \frac{(\log N)^{s/2}}{N},
\end{equation*}
where $S(N)$ is the sum-of-digits function of $N$ in base 2 representation, i.e. if $N = 2^{m_1} + 2^{m_2} + \cdots + 2^{m_r}$ with $m_1 > m_2 > \cdots > m_r \ge 0$, then $S(N)=r$. Obviously, we have $S(N) \le 1+(\log N)/(\log 2)$ for all $N \in \NN$.
\end{theorem}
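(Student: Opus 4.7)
The plan is to take $\cS_s$ to be a higher order digital sequence over $\FF_2$ of the type constructed in \cite{D08}, obtained by digitally interleaving classical digital sequences to boost the order of convergence, and then to bound $\LL_{2,N}(\cS_s)$ via a dyadic decomposition of the first $N$ points. Concretely, writing $N = 2^{m_1} + \cdots + 2^{m_r}$ with $m_1 > m_2 > \cdots > m_r \ge 0$ and $r = S(N)$, I would partition $\{\bsx_0,\ldots,\bsx_{N-1}\}$ into $r$ consecutive blocks $B_1,\ldots,B_r$, where $B_l$ consists of the $2^{m_l}$ points $\bsx_n$ with $2^{m_1}+\cdots+2^{m_{l-1}} \le n < 2^{m_1}+\cdots+2^{m_l}$. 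A direct inspection of the digital construction shows that each $B_l$ is, up to a fixed digital shift $\bssigma_l \in \FF_2^s$, a higher order digital net of $2^{m_l}$ points generated by the top $m_l$ rows of the generating matrices of $\cS_s$.

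Second, I would apply the $\LL_2$ discrepancy bound for higher order digital nets from \cite{DP05b} to each block, yielding $\LL_{2,2^{m_l}}(B_l) \ll_s m_l^{(s-1)/2}/2^{m_l}$. The digital shift $\bssigma_l$ does not affect this estimate, because the Walsh series bounds underlying it are invariant under such shifts of the underlying net.

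Third, the local discrepancy $\Delta_N$ decomposes as the weighted sum $\Delta_N = \sum_{l=1}^r (2^{m_l}/N)\, \Delta^{(l)}$ of the block discrepancies $\Delta^{(l)}$, since the counting function is additive over blocks and $\sum_l 2^{m_l} = N$. Expanding each $\Delta^{(l)}$ in its Walsh series and applying Parseval gives
\[
\|\Delta_N\|_2^2 = \sum_{\bsk \ne \bszero} \Bigl| \sum_{l=1}^r \frac{2^{m_l}}{N}\, a_{l,\bsk} \Bigr|^2,
\]
where $a_{l,\bsk}$ denotes the Walsh coefficient of $\Delta^{(l)}$ at index $\bsk$. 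The aim is to exploit the duality structure of Dick's construction to obtain the approximate orthogonality estimate
\[
\|\Delta_N\|_2^2 \ll_s \sum_{l=1}^r \Bigl(\frac{2^{m_l}}{N}\Bigr)^2 \|\Delta^{(l)}\|_2^2 \ll_s \frac{1}{N^2} \sum_{l=1}^r m_l^{s-1} \ll_s \frac{S(N)\,(\log N)^{s-1}}{N^2}.
\]
Taking square roots yields the first bound in Theorem~\ref{thm1}, and $S(N) \le 1 + \log_2 N$ then delivers the weaker $(\log N)^{s/2}/N$ estimate.

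I expect the main obstacle to be the approximate orthogonality in the third step: the naive triangle inequality for the $L^2$ norm only produces a factor $S(N)$, so the $\sqrt{S(N)}$ improvement requires genuine cancellation in the inner sum over $l$. I expect the argument to rest on the observation that consecutive blocks $B_l$ are digital shifts of one another, so that for a fixed Walsh index $\bsk$ only those blocks whose dual higher order digital net contains $\bsk$ contribute to $a_{l,\bsk}$, combined with a careful count of these contributions across the interleaved generator matrices of \cite{D08}.
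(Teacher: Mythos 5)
Your overall skeleton --- the dyadic block decomposition of the first $N$ points, the observation that each block is a digitally shifted higher order digital net generated by truncated generating matrices, and the diagonal accounting $N^{-2}\sum_l m_l^{s-1}\ll r(\log N)^{s-1}N^{-2}$ --- is exactly the paper's, but the proof has a genuine gap precisely where you flag it: the approximate orthogonality in your third step is the entire content of the theorem, and the mechanism you sketch for it would not work. A fixed index $\bsk$ typically lies in the dual nets $\Dcal_l$ of many blocks simultaneously (the duals are cut out by truncations of the same generating matrices, so membership for one $l$ does not exclude the others), so ``only those blocks whose dual net contains $\bsk$ contribute'' produces no cancellation, and the triangle inequality still costs a factor $S(N)$ rather than $\sqrt{S(N)}$. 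What the paper actually proves is not cancellation but smallness of the off-diagonal terms: starting from the non-diagonal Warnock--Walsh kernel $r(\bsk,\bsl)$ (not the diagonal Parseval form you write), the cross term for blocks $i<i'$ is controlled by showing (Lemma~\ref{lem_J_empty}) that the joint conditions $\bsk\in\Dcal_i^\ast$, $\bsl\in\Dcal_{i'}^\ast$ and $r(\bsk,\bsl)\neq 0$ force $\mu(\bsk)+\mu(\bsl)\ge \frac{1}{4}(5m_i+3m_{i'})-t+\frac{3}{4}$, an excess of about $(m_i-m_{i'})/4$ over the diagonal threshold $m_i+m_{i'}-2t+2$. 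Combined with the counting bound $|\Jcal_{i,i'}(z)|\ll_s 2^{z/2}\cdot(\text{polynomial in } z)$, this makes the $(i,i')$ contribution decay geometrically in $m_i-m_{i'}$, so summing over $i'$ for fixed $i$ gives $O(1)$ and hence $O(r)$ overall. This is exactly where the interlacing order $\alpha\ge 5$ enters ($r(k_j,l_j)\neq 0$ forces $k_j$ and $l_j$ to agree except in their two leading digits, whence $b_{j,3}\ge a_{j,5}$ and the order-$5$ linear independence applies); your proposal gives no indication of why any particular order is needed or how the geometric decay in $|m_i-m_{i'}|$ would arise.

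A second problem is your step two: you invoke \cite{DP05b} for the per-block bound $\LL_{2,2^{m_l}}(B_l)\ll_s m_l^{(s-1)/2}/2^{m_l}$, but that reference bounds the \emph{mean} square discrepancy averaged over random digital shifts of classical nets; a deterministic bound of this strength for each individual (digitally shifted) higher order net is itself one of the new results of the present paper (Theorem~\ref{thm_net}) and is proved by the same kernel machinery, so it cannot be quoted as a known ingredient. (Also, blocks of different cardinalities $2^{m_l}$ cannot be digital shifts of one another, as your last paragraph suggests.)
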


\begin{remark}\rm
It follows from \cite[Corollary~3]{LP05} that for any $\varepsilon >0$ we have $$\lim_{M \rightarrow \infty} \frac{1}{M}\left|\left\{0 \le N< M\, : \, (1-\varepsilon) \frac{\log M}{2 \log 2} < S(N) < (1+\varepsilon) \frac{\log M}{2 \log 2}\right\}\right|=1.$$ Hence the density of $N \in \NN$ for which $S(N)$ is at least of order $\log N$ is equal to one. More precise results on the distribution of the sum-of-digits function can be obtained, e.g., from \cite{BaKa,Man}.
\end{remark}

The above construction can also be used to obtain the following result for finite point sets, which was first shown in \cite{CS02} by a different construction.
\begin{corollary}\label{thm0}
For any $s \in \NN$ and any integer $N \ge 2$ one can explicitly construct a point set $\cP_{N,s}$ consisting of $N$ elements in $[0,1)^s$ such that $$\mathcal{L}_{2,N}(\cP_{N,s}) \ll_s  \frac{(\log N)^{(s-1)/2}}{N}.$$
\end{corollary}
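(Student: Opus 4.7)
The plan is to derive the finite point set bound from Theorem~\ref{thm1} by the classical Roth-type trick of prepending an evenly spaced first coordinate $n/N$ to the $n$-th term of the $(s-1)$-dimensional infinite sequence constructed in Theorem~\ref{thm1}, and then controlling the resulting $\LL_2$ discrepancy by the $\LL_2$ discrepancies of the initial segments of $\cS_{s-1}$.

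For $s=1$ I would first dispose of the trivial case: the equally spaced grid $\{n/N : 0 \le n \le N-1\}$ satisfies $\LL_{2,N}\le 1/N$, matching the claim. For $s\ge 2$, let $\cS_{s-1}=(\bsy_0,\bsy_1,\ldots)$ be the explicit $(s-1)$-dimensional sequence supplied by Theorem~\ref{thm1} and set
$$\cP_{N,s} := \{(n/N,\,\bsy_n) : 0 \le n \le N-1\}\subset [0,1)^s.$$

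The key step is a pointwise identity for the local discrepancy. For $\bst=(t_1,\bst')\in [0,1]^s$, put $M := \lceil N t_1 \rceil$; then a direct count of the points of $\cP_{N,s}$ lying in $[\bszero,\bst)$ gives
$$N\,\Delta_{\cP_{N,s}}(\bst) \;=\; M\,\Delta^{(s-1)}_{M}(\bst') \;+\; (M-Nt_1)\,t_2\cdots t_s,$$
where $\Delta^{(s-1)}_M$ denotes the local discrepancy function of the first $M$ terms of $\cS_{s-1}$. Since $|M-Nt_1|\le 1$, applying $(a+b)^2 \le 2a^2+2b^2$ pointwise and then integrating over $\bst\in [0,1]^s$---splitting the $t_1$-range into the $N$ subintervals of length $1/N$ on which $M$ is constant---leads to
$$N^2\, \LL_{2,N}(\cP_{N,s})^2 \;\le\; \frac{2}{N}\sum_{M=1}^{N} M^2\, \LL_{2,M}(\cS_{s-1})^2 \;+\; 2.$$

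To conclude, I would insert the bound from Theorem~\ref{thm1} applied in dimension $s-1$, giving $M^2\LL_{2,M}(\cS_{s-1})^2 \ll_s (\log M)^{s-2}\,S(M)$, which by $S(M)\le 1+\log_2 M$ is $\ll_s (\log M)^{s-1}$. Summing over $M\le N$ yields $N^2\LL_{2,N}(\cP_{N,s})^2\ll_s (\log N)^{s-1}$, i.e.\ exactly the claimed estimate. The only delicate step is the bookkeeping for the pointwise identity and the control of the boundary contribution through $|M-Nt_1|\le 1$; once that is in place, the remainder is a routine summation of the bound from Theorem~\ref{thm1}.
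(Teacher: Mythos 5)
Your argument is correct, but it takes a genuinely different route from the paper. The paper does not lift the $(s-1)$-dimensional sequence of Theorem~\ref{thm1}: instead, for $2^{m-1}<N\le 2^m$ it builds an order~$3$ interlaced digital net of $2^m$ points whose first coordinate is a digital $(0,m,1)$-net, proves a separate net-level bound $\LL_{2,2^m}(\cP_{2^m,s})\ll_s m^{(s-1)/2}2^{t-m}$ (Theorem~\ref{thm_net}), and then passes to $N$ points by the Chen--Skriganov propagation rule, i.e.\ intersecting with $[0,N/2^m)\times[0,1)^{s-1}$ and rescaling the first coordinate by $2^m/N$. You instead combine Theorem~\ref{thm1} in dimension $s-1$ with the classical Davenport/Roth lifting $n\mapsto (n/N,\bsy_n)$; your pointwise identity $N\Delta(\bst)=M\Delta_M^{(s-1)}(\bst')+(M-Nt_1)\,t_2\cdots t_s$ with $M=\lceil Nt_1\rceil$ is exactly right, and the averaging $\frac{2}{N}\sum_{M=1}^{N}M^2\LL_{2,M}^2(\cS_{s-1})\ll_s(\log N)^{s-1}$ (treating $M=1$ trivially, since Theorem~\ref{thm1} requires $M\ge 2$, and using $S(M)\le 1+\log_2 M$) closes the argument; the separate treatment of $s=1$ via the grid $\{n/N\}$ is also needed and correct. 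What your route buys is modularity: any infinite sequence meeting Proinov's bound in dimension $s-1$ automatically yields Roth-optimal $N$-point sets in dimension $s$, with no new combinatorics. What the paper's route buys is that it only needs interlacing factor $\alpha=3$ (an underlying sequence in dimension $3s-1$, and no appeal to Lemma~\ref{lem_J_empty}, which is where the hypothesis $\alpha\ge 5$ enters), whereas your proof inherits the heavier hypotheses of Theorem~\ref{thm1} in dimension $s-1$ (interlacing factor $5$, hence an underlying sequence in dimension $5(s-1)$); moreover the paper's point set is itself a rescaled higher-order digital net, which is structurally cleaner and self-contained at the level of finite nets.
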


\subsection{Explicit constructions of sequences and point sets}\label{sec_construction}

We now present explicit constructions of sequences and point sets satisfying Theorem~\ref{thm1} and Corollary~\ref{thm0}.

The construction of sequences $\cS_s=(\boldsymbol{x}_0,\boldsymbol{x}_1,\ldots)$ in $[0,1)^s$ satisfying Theorem~\ref{thm1} was introduced in \cite{D07,D08} and is based on linear algebra over the finite field $\mathbb{F}_2$ of order $2$ (we identify $\FF_2$ with the set $\{0,1\}$ equipped with the arithmetic operations modulo 2).

First we need to recall the definition of digital nets according to Niederreiter \cite{nie87,nie92}: For $m,p \in \NN$ with $p \ge m$ let $C_1,\ldots, C_s \in \mathbb{F}_2^{p \times m}$ be $p \times m$ matrices over $\mathbb{F}_2$. For $n \in \{0,\ldots ,2^m-1\}$ with binary expansion $n = n_0 + n_1 2 + \cdots + n_{m-1} 2^{m-1}$ we define the binary digit vector $\vec{n}$ as $\vec{n} = (n_0, n_1, \ldots, n_{m-1})^\top \in \mathbb{F}_2^{m}$ (the symbol $\top$ means the transpose of a vector or a matrix). Then compute
\begin{equation*}
C_j \vec{n} =:(x_{j,n,1}, x_{j,n,2},\ldots,x_{j,n,p})^\top \quad \mbox{for } j = 1,\ldots, s,
\end{equation*}
where the matrix vector product is evaluated over $\FF_2$, and put
\begin{equation*}
x_{j,n} = x_{j,n,1} 2^{-1} + x_{j,n,2} 2^{-2} + \cdots + x_{j,n,p} 2^{-p} \in \QQ(2^p).
\end{equation*}
The $n$th point $\boldsymbol{x}_n$ of the net $\cP_{2^m,s}$ is given by $\boldsymbol{x}_n = (x_{1,n}, \ldots, x_{s,n})$. A net $\cP_{2^m,s}$ constructed this way is called a {\it digital net (over $\FF_2$) with generating matrices} $C_1,\ldots,C_s$. Note that a digital net consists of $2^m$ elements in $\QQ(2^p)^s$.

We also recall the definition of digital sequences according to Niederreiter \cite{nie87,nie92}, which are infinite versions of digital nets. Let $C_1,\ldots, C_s \in \mathbb{F}_2^{\mathbb{N} \times \mathbb{N}}$ be $\mathbb{N} \times \mathbb{N}$ matrices over $\mathbb{F}_2$. For $C_j = (c_{j,k,\ell})_{k, \ell \in \mathbb{N}}$ we assume that for each $\ell \in \mathbb{N}$ there exists a $K(\ell) \in \mathbb{N}$ such that $c_{j,k,\ell} = 0$ for all $k > K(\ell)$. For $n \in \NN_0$, where $\NN_0=\NN \cup \{0\}$, with binary expansion $n = n_0 + n_1 2 + \cdots + n_{m-1} 2^{m-1} \in \mathbb{N}_0$,  we define the infinite dyadic digit vector of $n$ by $\vec{n} = (n_0, n_1, \ldots, n_{m-1}, 0, 0, \ldots )^\top \in \mathbb{F}_2^{\mathbb{N}}$. Then compute
\begin{equation*}
C_j \vec{n}=:(x_{j,n,1}, x_{j,n,2},\ldots)^\top \quad \mbox{for } j = 1,\ldots, s,
\end{equation*}
where the matrix vector product is evaluated over $\FF_2$, and put
\begin{equation*}
x_{j,n} = x_{j,n,1} 2^{-1} + x_{j,n,2} 2^{-2} + \cdots \in [0,1).
\end{equation*}
The $n$th point $\boldsymbol{x}_n$ of the sequence $\cS_s$ is given by $\boldsymbol{x}_n = (x_{1,n}, \ldots, x_{s,n})$. A sequence $\cS_s$ constructed this way is called a {\it digital sequence (over $\FF_2$) with generating matrices} $C_1,\ldots,C_s$. Note that since $c_{j,k,\ell} = 0$ for all $k$ large enough, the numbers $x_{j,n}$ are always dyadic rationals. (We call $x\in [0,1)$ a dyadic rational if it can be written in a finite base $2$ expansion.)

Explicit constructions of suitable generating matrices $C_1,\ldots, C_s$ over $\mathbb{F}_2$ were obtained by Sobol'~\cite{sob67}, Niederreiter~\cite{nie87,nie92}, Niederreiter-Xing~\cite{NX96} and others (see \cite[Chapter~8]{DP10} for an overview). Any of these constructions is sufficient for our purpose, however, for completeness, we briefly describe a special case of Tezuka's construction~\cite{Tez}, which is a generalization of Sobol's construction~\cite{sob67} and Niederreiter's construction~\cite{nie87} of the generating matrices. 

We explain how to construct the entries $c_{j,k,\ell} \in \mathbb{F}_2$ of the generator matrices $C_j = (c_{j,k,\ell})_{k,\ell \ge 1}$ for $j=1,2,\ldots,s$. To this end choose the polynomials $p_1=x$ and $p_j \in \mathbb{F}_2[x]$ for $j =2,\ldots,s$ to be the $(j-1)$th primitive polynomial in a list of primitive polynomials over $\mathbb{F}_2$ that is sorted in increasing order according to their degree $e_j = \deg(p_j)$, that is, $e_2 \le e_3 \le \cdots \le e_{s-1}$ (the ordering of polynomials with the same degree is irrelevant). We also put $e_1=\deg(x)=1$. (We point out that Niederreiter~\cite{nie87} uses irreducible polynomials instead of primitive polynomials.)

Let $j \in \{1,\ldots,s\}$ and $k \in \NN$. Take $i-1$ and $z$ to be respectively the main term and remainder when we divide $k-1$ by $e_j$, so that   $k-1  = (i-1) e_j + z$, with $0 \le z < e_j$. Now consider the Laurent series expansion
\begin{equation*}
\frac{x^{e_j-z-1}}{p_j(x)^i} = \sum_{\ell =1}^\infty a_\ell(i,j,z) x^{-\ell} \in \mathbb{F}_2((x^{-1})).
\end{equation*}
For $\ell \in \mathbb{N}$ we set
\begin{equation}\label{def_sob_mat}
c_{j,k,\ell} = a_\ell(i,j,z).
\end{equation}
Every digital sequence with generating matrices $C_j = (c_{j,k,\ell})_{k,\ell \ge 1}$ for $j=1,2,\ldots,s$ found in this way is a special instance of a Sobol' sequence which in turn is a special instance of so-called generalized Niederreiter sequences (see \cite[Eq. (3)]{Tez}). Note that in the construction above we always have $c_{j,k,\ell}=0$ for all $k > \ell$.

Note that generalized Niederreiter sequences (as are Sobol's and Niederreiter's sequences) are digital $(t,s)$-sequences with
\begin{equation}\label{t_digseq}
t = \sum_{j=1}^s (e_j-1).
\end{equation}
See \cite[Lemma~4]{Tez} for details.

To obtain a sequence which satisfies Theorem~\ref{thm1} we need the following definition.

\begin{definition}\rm
For $\alpha \in \NN$ the {\it digit interlacing composition} (with interlacing factor $\alpha$) is defined by
\begin{eqnarray*}
\mathscr{D}_\alpha: [0,1)^{\alpha} & \to & [0,1) \\
(x_1,\ldots, x_{\alpha}) &\mapsto & \sum_{a=1}^\infty \sum_{r=1}^\alpha
\xi_{r,a} 2^{-r - (a-1) \alpha},
\end{eqnarray*}
where $x_r \in [0,1)$ has dyadic expansion of the form $x_r = \xi_{r,1} 2^{-1} + \xi_{r,2} 2^{-2} + \cdots$ for $1 \le r \le \alpha$. We also define this function for vectors by setting
\begin{eqnarray*}
\mathscr{D}_\alpha^s: [0,1)^{\alpha s} & \to & [0,1)^s \\
(x_1,\ldots, x_{\alpha s}) &\mapsto & (\mathscr{D}_\alpha(x_1,\ldots, x_\alpha),  \ldots, \mathscr{D}_\alpha(x_{(s-1) \alpha +1},\ldots, x_{\alpha s})),
\end{eqnarray*}
for point sets $\cP_{N,\alpha s} = \{\bsx_0,\bsx_1, \ldots, \bsx_{N-1}\} \subseteq [0,1)^{\alpha s}$ by setting
\begin{equation*}
\mathscr{D}_\alpha^s(\cP_{N,\alpha s}) = \{\mathscr{D}_\alpha^s(\bsx_0), \mathscr{D}_\alpha^s(\bsx_1), \ldots, \mathscr{D}_\alpha^s(\bsx_{N-1})\}\subseteq[0,1)^s
\end{equation*}
and for sequences $\cS_{\alpha s} = (\bsx_0, \bsx_1, \ldots)$ with $\bsx_n \in [0,1)^{\alpha s}$ by setting
\begin{equation*}
\mathscr{D}_\alpha^s(\cS_{\alpha s}) = (\mathscr{D}_\alpha^s(\bsx_0), \mathscr{D}_\alpha^s(\bsx_1), \ldots).
\end{equation*}
\end{definition}

We comment here that the interlacing can also be applied to the generating matrices $C_1,\ldots, C_{\alpha s}$ directly as described in \cite[Section~4.4]{D08}. This is done in the following way. Let $C_1, \ldots, C_{\alpha s}$ be generating matrices of a digital net or sequence and let $\vec{c}_{j,k}$ denote the $k$th row of $C_j$. We define matrices $E_1,\ldots, E_s$, where the $k$th row of $E_j$ is given by $\vec{e}_{j,k}$, in the following way. For all $1 \le j \le s$, $u \ge 0$ and $1 \le v \le \alpha$ let
\begin{equation*}
\vec{e}_{j,u\alpha + v} = \vec{c}_{(j-1) \alpha + v, u+1}
\end{equation*}
If $C_1, \ldots, C_{\alpha s}$ are the generating matrices of a digital net $\cP_{N, \alpha s}$ or digital sequence $\cS{\alpha s}$ respectively, then the matrices $E_1,\ldots, E_s$ defined above, are the generating matrices of $\mathscr{D}_\alpha^s(\cP_{N,\alpha s})$ or $\mathscr{D}_\alpha^s(\cS_{\alpha s})$ respectively. Thus one can also obtain generating matrices $E_1,\ldots, E_s \in \mathbb{F}_2^{\mathbb{N} \times \mathbb{N}}$ which generate a digital sequence satisfying Theorem~\ref{thm1}.

Above we assumed that $c_{j,k,\ell}=0$ for all $k > K(\ell)$. Let $E_j = (e_{j,k,\ell})_{k, \ell \in \mathbb{N}}$. Then the interlacing construction yields that $e_{j,k,\ell} = 0$ for all $k > \alpha K(\ell)$, where $\alpha$ is the interlacing factor.

We shall show that the sequence $\mathscr{D}_5^s(\cS_{5s})$, where $\cS_{5s}$ is a digital sequence in dimension $5s$ constructed, for example, according to Sobol' as presented above, satisfies the bounds in Theorem~\ref{thm1}.\\

To construct finite point sets for any integer $N \ge 2$ we proceed in the following way. Let $m \in \NN$ be such that $2^{m-1} <N \le 2^m$ and let $\bsx_0, \bsx_1,\ldots, \bsx_{2^m-1} \in [0,1]^{3s-1}$ be the first $2^m$ points from the Sobol' or Niederreiter sequence in dimension $3s-1$ as introduced above with $p_1=x$ and $p_2= 1+ x$. Let $\bsx_n = (x_{1,n},\ldots, x_{3s-1,n})$ and define $\bsy_n = (n 2^{-m}, x_{1,n},\ldots, x_{3s-1,n}) \in [0,1)^{3s}$. Let now $\cP_{2^m,s} = \{\mathscr{D}_3(\bsy_0), \mathscr{D}_3(\bsy_1), \ldots, \mathscr{D}_3(\bsy_{2^m})\}$. To obtain a point set consisting of $N$ points we use a propagation rule introduced in \cite{CS02} (see also \cite[p.~512]{DP10}): The subset
$$\widetilde{\cP}_{N,s}:=\cP_{2^m,s} \cap
\left(\left[0,\frac{N}{2^m}\right) \times [0,1)^{s-1}\right)$$
contains exactly $N$ points. Then we define the point set
\begin{equation}\label{pspropcs}
\cP_{N,s}:=\left\{\left(\frac{2^m}{N} x_1,x_2,\ldots,x_s\right)\, :
\, (x_1,x_2,\ldots,x_s) \in \widetilde{\cP}_{N,s}\right\}.
\end{equation}
We will show that $\cP_{N,s}$ satisfies the bound in Corollary~\ref{thm0}. We remark that Chen and Skriganov~\cite{CS02} applied the same propagation rule but to a different point set.

\subsection{The general construction principle}\label{sec_gen_princ}

Our approach is based on higher order digital nets and sequences constructed explicitly in \cite{D07,D08}. We state here simplified versions of their definitions that are sufficient for our purpose. For $p \in \NN$ let $\QQ(2^p):=\left\{0,\frac{1}{2^p},\frac{2}{2^p},\ldots,\frac{2^p -1}{2^p}\right\}$.

The distribution quality of digital nets and sequences depends on the choice of the respective generating matrices. In the following definitions we put some restrictions on $C_1,\ldots ,C_s$ with the aim to quantify the quality of equidistribution of the digital net or sequence.

\begin{definition}\rm\label{def_net}
Let $m, p, \alpha \in \NN$ with $p \ge \alpha m$ and let $t$ be an integer such that $0 \le t \le \alpha m$. Let $C_1,\ldots, C_s \in \FF_2^{p \times m}$ with $C_j = (\vec{c}_{j,1},
\ldots, \vec{c}_{j, p})^\top$, i.e., $\vec{c}_{j,i} \in \FF_2^m$ is the $i$th row vector of the matrix $C_j$. If for all $1 \le i_{j,\nu_j} < \cdots <
i_{j,1} \le p$ with $$\sum_{j = 1}^s \sum_{l=1}^{\min(\nu_j,\alpha)} i_{j,l}  \le
\alpha m - t$$ the vectors
$$\vec{c}_{1,i_{1,\nu_1}}, \ldots, \vec{c}_{1,i_{1,1}}, \ldots,
\vec{c}_{s,i_{s,\nu_s}}, \ldots, \vec{c}_{s,i_{s,1}}$$ are linearly independent
over $\FF_2$, then the digital net with generating matrices
$C_1,\ldots, C_s$ is called an {\it order $\alpha$ digital $(t,m,s)$-net over $\FF_2$}.
\end{definition}

Next we consider digital sequences for which the initial segments are order $\alpha$ digital $(t, m,s)$-nets over $\FF_2$:

\begin{definition}\rm\label{def_seq}
Let $\alpha \in \NN$ and let $t \ge 0$ be an integer. Let $C_1,\ldots, C_s \in \FF_2^{\mathbb{N} \times \mathbb{N}}$ and let $C_{j, \alpha m \times m}$ denote the left upper $\alpha m \times m$ submatrix of  $C_j$. If for all $m > t/\alpha$ the matrices $C_{1, \alpha m \times m},\ldots, C_{s, \alpha m \times m}$ generate an order $\alpha$ digital $(t, m,s)$-net over $\FF_2$, then the digital sequence with generating matrices $C_1,\ldots, C_s$ is called an {\it order $\alpha$ digital $(t,s)$-sequence over $\FF_2$}.
\end{definition}

From Definition~\ref{def_net} it is clear the if $\cP_{2^m,s}$ is an order $\alpha$ digital $(t,m,s)$-net, then for any $t \le t' \le \alpha m$, $\cP_{2^m,s}$ is also an order $\alpha$ digital $(t',m,s)$-net. An analogue result also applies to higher order digital sequences.

From \cite[Theorem~4.11 and Theorem~4.12]{D07} (where we set $\alpha = d$) we obtain the following result:
\begin{proposition}
If $\cS_{\alpha s}$ is an order 1 digital $(t',\alpha s)$-sequence over $\FF_2$, then $\mathscr{D}_\alpha^s(\cS_{\alpha s})$ is an order $\alpha$ digital $(t,s)$-sequence over $\FF_2$ with $$t = \alpha t' + s {\alpha \choose 2}.$$
\end{proposition}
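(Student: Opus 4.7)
The plan is to verify Definition~\ref{def_seq} directly for the interlaced generating matrices $E_1,\ldots,E_s$ of $\mathscr{D}_\alpha^s(\cS_{\alpha s})$, by reducing the order-$\alpha$ linear independence required there to the order-$1$ linear independence guaranteed by the hypothesis on $\cS_{\alpha s}$. This is the content of Theorems~4.11 and~4.12 of~\cite{D07} with $d=\alpha$, and the argument is a careful index translation via the interlacing rule $\vec{e}_{j,u\alpha+v}=\vec{c}_{(j-1)\alpha+v,\,u+1}$ for $0\le u$ and $1\le v\le\alpha$, under which every row of the upper $\alpha m\times m$ block of $E_j$ is literally a row of one of the upper $m\times m$ blocks $C_{(j-1)\alpha+1,\,m\times m},\ldots,C_{j\alpha,\,m\times m}$.

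Fix $m>t/\alpha$ and choose any indices $1\le i_{j,\nu_j}<\cdots<i_{j,1}\le\alpha m$ obeying the order-$\alpha$ constraint $\sum_{j=1}^s\sum_{l=1}^{\min(\nu_j,\alpha)}i_{j,l}\le\alpha m-t$. Write $i_{j,l}=u_{j,l}\alpha+v_{j,l}$ with $u_{j,l}\ge 0$ and $v_{j,l}\in\{1,\ldots,\alpha\}$, and for each $j$ set $V_j=\{v_{j,l}:l=1,\ldots,\nu_j\}$, $i^\star_{j,v}=\max\{i_{j,l}:v_{j,l}=v\}$ for $v\in V_j$, and $I^\star_{j,v}=(i^\star_{j,v}+\alpha-v)/\alpha$, which is the largest row index of $C_{(j-1)\alpha+v,\,m\times m}$ that appears among the translated rows. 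The rows selected in the $E_j$'s coincide with the corresponding rows of the $C_{j'}$'s, and by Definition~\ref{def_net} applied to the order-$1$ digital $(t',\alpha s)$-sequence $\cS_{\alpha s}$ these rows are linearly independent as soon as
\[
\sum_{j=1}^s\sum_{v\in V_j}I^\star_{j,v}\le m-t'.
\]

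The main step is then the per-dimension bound
\[
\sum_{v\in V_j}I^\star_{j,v}\le\frac{1}{\alpha}\sum_{l=1}^{\min(\nu_j,\alpha)}i_{j,l}+\frac{1}{\alpha}\binom{\alpha}{2},
\]
obtained by observing that the values $i^\star_{j,v}$ for $v\in V_j$ have pairwise distinct residues modulo $\alpha$, hence form a subset of $\{i_{j,1},\ldots,i_{j,\nu_j}\}$ of cardinality $|V_j|\le\min(\nu_j,\alpha)$, so their total is at most the sum of the $\min(\nu_j,\alpha)$ largest $i_{j,l}$; and from $\sum_{v\in V_j}(\alpha-v)\le\sum_{v=1}^{\alpha}(\alpha-v)=\binom{\alpha}{2}$. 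Summing over $j=1,\ldots,s$ and invoking the order-$\alpha$ hypothesis gives $\sum_{j,v}I^\star_{j,v}\le (\alpha m-t)/\alpha+s\binom{\alpha}{2}/\alpha$, which equals $m-t'$ precisely for $t=\alpha t'+s\binom{\alpha}{2}$.

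The main obstacle is the combinatorial bookkeeping just outlined: one has to recognize that different residues $v\in V_j$ correspond to different component matrices $C_{(j-1)\alpha+v}$, so the selected rows never collide across $v$, and one has to absorb the shift $(\alpha-v)/\alpha$ correctly when converting between $I^\star_{j,v}$ and $i^\star_{j,v}/\alpha$. The extra summand $s\binom{\alpha}{2}$ appearing in $t$ is exactly the accumulated cost of these $\alpha-v$ corrections across the $s$ dimensions and the $\alpha$ interlaced components in each dimension.
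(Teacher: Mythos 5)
Your proof is correct and follows the same route the paper takes, namely the interlacing argument of \cite[Theorems 4.11 and 4.12]{D07}, which the paper simply cites rather than reproving; your index bookkeeping (distinct residues $v$ selecting rows from distinct component matrices, the bound $\sum_{v\in V_j} i^{\star}_{j,v}\le\sum_{l=1}^{\min(\nu_j,\alpha)}i_{j,l}$, and the $\sum_{v}(\alpha-v)\le\binom{\alpha}{2}$ correction) is exactly what produces $t=\alpha t'+s\binom{\alpha}{2}$. The only point worth making explicit is that $m>t/\alpha$ indeed forces $m>t'$, so the order-$1$ net property of the $m\times m$ submatrices of $C_1,\ldots,C_{\alpha s}$ is actually available; this follows from $t/\alpha=t'+\frac{s}{\alpha}\binom{\alpha}{2}\ge t'$.
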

For the construction based on Sobol's and Niederreiter's sequence introduced above we have \eqref{t_digseq} and therefore we obtain explicit constructions of order $\alpha$ digital $(t,s)$-sequences with
\begin{equation*}
t = \alpha \sum_{j=1}^s (e_j-1) + s {\alpha \choose 2}.
\end{equation*}
Note that for the construction introduced above we have $c_{j,k, \ell} = 0$ for all $k > \ell$. Using the interlacing construction we obtain generating matrices $E_1,\ldots, E_s$ with $E_j = (e_{j,k,\ell})_{k,\ell \in \mathbb{N}}$ and $e_{j,k,\ell} = 0$ for all $k > \alpha \ell$. Let $E_{j, \mathbb{N} \times m}$ denote the first $m$ columns of $E_{j}$. Then we obtain that the $k$th row of $E_{j, \mathbb{N} \times m}$ is the zero-vector for all $k > \alpha m$. This implies that the first $2^m$ points of the digital sequence with generating matrices $E_1,\ldots, E_s$ are the same as the points of the digital net with generating matrices $E_{1, \alpha m \times m}, \ldots, E_{s, \alpha m \times m}$. In particular this implies that all coordinates of all points are dyadic rationals. (For more general constructions of digital $(t,s)$-sequences a similar result holds, however we do not use this fact here.)

Note that a digital net can be an order $\alpha$ digital $(t,m,s)$-net over $\FF_2$ and at the same time an order $\alpha'$ digital $(t',m,s)$-net over $\
\FF_2$ for $\alpha'\not = \alpha$. This means that the quality parameter $t$ may depend on $\alpha$. If necessary we write $t(\alpha)$ instead of $t$ for the quality parameter of an order $\alpha$ digital $(t(\alpha),m, s)$-net. The same holds for digital sequences. In particular \cite[Theorem~4.10]{D08} implies that an order $\alpha$ digital $(t,m,s)$-net is an order $\alpha'$ digital $(t',m,s)$-net for all $1 \le \alpha' \le \alpha$ with
\begin{equation}\label{eq_t_tprime}
t' = \lceil t \alpha'/\alpha \rceil \le t.
\end{equation}
The same result applies to order $\alpha$ digital $(t,s)$-sequences which are also order $1\le \alpha' \le \alpha$ digital $(t',s)$-sequences with $t'$ as above. In other words, $t(\alpha') = \lceil t(\alpha) \alpha'/\alpha \rceil$ for all $1 \le \alpha' \le \alpha$. More information can be found in \cite[Chapter~15]{DP10}.

We will show that every order $\alpha$ digital $(t,s)$-sequence over $\FF_2$ with $\alpha \ge 5$ satisfies the requirements of Theorem~\ref{thm1}.

\subsection{Geometric properties of (higher order) digital nets}

We give a geometric interpretation of the digital nets introduced above. For $\alpha=1$ they go back to Niederreiter \cite{nie87,nie92}. The condition in Definition~\ref{def_net} says that so-called dyadic elementary boxes of the form $$\prod_{j=1}^s \left[\frac{a_j}{2^{d_j}}, \frac{a_j+1}{2^{d_j}}\right)$$ with integers $d_j \ge 0$, $d_1+\cdots + d_s = m-t$ and integers $0 \le a_j < 2^{d_j}$, contain $b^t$ points of the net, which is the fair portion of points of the net with respect to the volume of the box. Thus smaller values of the so-called quality parameter $t$ imply stronger equidistribution properties of a net. For more information see \cite[Theorem~4.28]{nie92} or \cite[Theorem~4.52]{DP10}.

The more general definition for $\alpha >1$ goes back to Dick \cite{D07,D08}. Rather than considering boxes containing the right portion of points as for the case $\alpha=1$, here one considers unions of such boxes. To give the geometric interpretation, we define for $\nu \in \mathbb{N}_0$, $a_1>a_2 >\cdots > a_\nu \ge -\nu + 1$ and $\kappa_1, \kappa_2, \ldots, \kappa_\nu \in \{0, 1\}$ the union of intervals
\begin{eqnarray*}
\lefteqn{J_\alpha(a_1, \ldots, a_{\nu}, \kappa_1,\ldots, \kappa_\nu)}  \\ & = & \left\{x \in [0,1): x = \sum_{d=1}^\infty \xi_d 2^{-d} \mbox{ with } \xi_{a_i} = \kappa_i \mbox{ for } i = 1, \ldots, \nu \right\},
\end{eqnarray*}
where we set $J=[0,1)$ for $\nu=0$, where $a_i \in \{-\nu + 1, -\nu + 2, \ldots, 0\}$ does not yield any restriction and where we always use the finite expansion of $x$ for dyadic rationals. For instance we have $J_2(0, -1, 0, 0) = [0,1)$, $J_2(1,0,0,0) = [0,1/2)$ and $J_2(3,1,1,1) = [5/8, 6/8) \cup [7/8, 1)$. Let $1_J(x)$  denote the indicator function of a set $J$ (which is 1 for $x \in J$ and 0 otherwise). Then an order $\alpha$ digital $(t,m,s)$-net satisfies
\begin{equation*}
\sum_{n=0}^{2^m-1} 1_J(\bsx_n) = \mathrm{Volume}(J),
\end{equation*}
for all $J$ of the form
\begin{equation*}
\prod_{j=1}^s J_\alpha(a_{1,j}, \ldots, a_{\nu_j,j}, \kappa_{1,j}, \ldots, \kappa_{\nu_j,j})
\end{equation*}
for all $\kappa_{r,j} \in \{0,1\}$ for all $1 \le r \le \nu_j$ and $1 \le j \le s$ and all $a_{j,1} > a_{j,2} > \cdots > a_{j,\nu_j} > -\nu_j + 1$ with
\begin{equation*}
\sum_{j=1}^s \sum_{r=1}^{\min\{\nu_j, \alpha\}} \max\{a_{j,r}, 0\} \le \alpha m - t.
\end{equation*}
Thus higher order digital nets do not only contain the correct proportion of points for elementary dyadic intervals, but also for certain unions of disjoint dyadic intervals. Thus higher order digital nets have an additional structure which classical digital nets do not necessarily have.

\section{Walsh series representation of the squared $\LL_2$ discrepancy}

As an important tool in our analysis we use a Walsh series representation of the $\LL_2$ discrepancy. This representation will be deduced within this chapter.

\subsection{Walsh functions}\label{sect_walsh}

We introduce Walsh functions in base $2$ (see \cite{chrest, fine, walsh}), which will be the main tool in our analysis of the $\LL_2$ discrepancy. We recall that $\mathbb{N}_0=\NN \cup \{0\}$.

For $k \in \NN_0$ the $k$th  Walsh function
$\wal_{k}:[0,1) \rightarrow \{-1,1\}$ is defined in the following way: let $k$ have base $2$ representation
\[
   k = \kappa_{a-1} 2^{a-1} + \cdots + \kappa_1 2 + \kappa_0,
\]
with $\kappa_i \in \{0,1\}$, and let $x \in [0,1)$ have base $2$ representation $$x =
\frac{\xi_1}{2}+\frac{\xi_2}{2^2}+\cdots $$ with $\xi_i \in \{0,1\}$ (unique in the sense that
infinitely many of the $\xi_i$ must be zero), then
\[
  \wal_{k}(x) := (-1)^{\xi_1 \kappa_0 + \cdots + \xi_a \kappa_{a-1}}.
\]

For dimension $s \geq 2$, vectors $\bsk = (k_1, \ldots, k_s) \in \nat_0^s$ and $\bsx =
(x_1,\ldots,x_s) \in [0,1)^s$ we write
\[
   \wal_{\bsk}(\bsx) := \prod_{j=1}^s\wal_{k_j}(x_j).
\]
A summary of properties of Walsh functions can also be found in \cite[Appendix~A]{DP10}. See also \cite{CS00} for Walsh functions in the context of discrepancy theory, \cite{LT94} for Walsh functions in the related context of numerical integration in \cite{LT94}, or \cite{Tez2} in the related context of pseudo random number generation.\\

We report on a relation between Walsh functions and digital nets over $\FF_2$ which will be useful for our analysis. Before we do so we need to introduce some further notation. By $\oplus$ we denote the digit-wise
addition modulo $2$, i.e., for real numbers $x, y \ge 0$ with dyadic expansion $x = \sum_{i=w}^{\infty}
\frac{\xi_i}{2^i}$ and $y = \sum_{i=w}^{\infty} \frac{\eta_i}{2^i}$ with $w \in \mathbb{Z}$ and $\xi_i\not=1$ for infinitely many $i$ and $\eta_j \not=1$ for infinitely many $j$, we put
$$ x \oplus y := \sum_{i=w}^{\infty } \frac{\zeta_i}{2^i}, \; \; \;{\rm where}
\; \; \; \zeta_i := \xi_i + \eta_i \mypmod{2}.$$ For vectors $\bsx, \bsy \in [0,1)^s$ we set $\bsx \oplus \bsy = (x_1 \oplus y_1, \ldots, x_s \oplus y_s)$. Note that e.g. for $x = 2^{-1} + 2^{-3} + 2^{-5} + \cdots$ and $y = 2^{-2} + 2^{-4} + 2^{-6} + \cdots$ we have $x \oplus y = 2^{-1} + 2^{-2} + 2^{-3} + \cdots = 1$, see \cite[Section~2]{fine}. Thus $x \oplus y$ is a dyadic rational which is not defined via its finite expansion. However, in this paper, we only use $\oplus$ in conjunction with dyadic rationals $x$ and $y$ for which we assume that $x$ and $y$ are given by their finite expansion. Therefore, in this paper, $x \oplus y$ will always be a dyadic rational defined via its finite expansion.

It can be shown (see \cite[Lemma~4.72]{DP10}) that any digital net $\cP_{2^m,s}$ is a subgroup of $([0,1)^s,\oplus)$. Since for any $\bsx_h, \bsx_j \in \cP_{2^m,s}$ and any $\bsk \in \NN_0^s$ we have $$\wal_{\bsk}(\bsx_h \oplus \bsx_j)=\wal_{\bsk}(\bsx_h)\wal_{\bsk}(\bsx_j)$$ it follows that $\wal_{\bsk}$ is a character of the group $(\cP_{2^m,s},\oplus)$. Hence, for any digital net $\cP_{2^m,s}$ with generating matrices $C_1,\ldots,C_s \in \FF_2^{p \times m}$ and any $\bsk=(k_1,\ldots,k_s) \in \NN_0^s$ it follows that
\begin{equation}\label{charprop}
\sum_{h=0}^{2^m-1} \wal_{\bsk}(\bsx_h)=\left\{
\begin{array}{ll}
2^m & \mbox{ if } C_1^{\top} \vec{k}_1+\cdots +C_s^{\top} \vec{k}_s=\vec{0},\\
0 & \mbox{ otherwise},
\end{array}\right.
\end{equation}
where for $k_j \in \mathbb{N}_0$ with dyadic expansion $k_j = \kappa_{j,0} + \kappa_{j,1} 2 + \cdots + \kappa_{j,a-1} 2^{a-1}$ we set $\vec{k}_j = (\kappa_{j,0}, \kappa_{j,1}, \ldots, \kappa_{j, p-1})^\top$ with $\kappa_{j,a} = \kappa_{j,a+1} = \cdots = \kappa_{j,p-1} = 0$ for $a < p$. For a proof of this fact we refer to \cite[Lemma~4.75]{D10} (therein only $p=m$ was considered, but only minor modifications are required to obtain a proof of \ref{charprop}). We will call this relation the {\it character property} of digital nets.

\subsection{The Walsh series expansion of the $\LL_2$ discrepancy}

The squared $\LL_2$ discrepancy of a point set $\cP_{N,s} =
\{\bsx_0,\ldots, \bsx_{N-1}\}$ can be viewed as a function
of $\{\bsx_0,\ldots, \bsx_{N-1}\}$, i.e. a function of $N s$
variables:
$$\LL^2_{2,N}(\cP_{N,s})=\LL_{2,N}^2(\{\bsx_0,\ldots, \bsx_{N-1}\}).$$ To obtain its Walsh series expansion, we use the following well known formula of Warnock \cite{war} (see also  \cite[Proposition~2.15]{DP10}).
\begin{proposition}\label{pr1}
Let $\cP_{N,s}=\{\bsx_0,\ldots ,\bsx_{N-1}\}$ be a point set in $[0,1)^s$.
Then we have
\begin{eqnarray*}\label{L2-dis}
\LL_{2,N}^2(\cP_{N,s}) =  \frac{1}{3^{s}}-\frac{2}{N}\sum_{n=0}^{N-1}\prod_{j=1}^s
\frac{1-x_{n,j}^2}{2}+\int_{[0,1]^s} \left( \frac{A_N([\bszero,\bst),
\cP_{N,s})}{N}\right)^2 \,\rd \bst,
\end{eqnarray*}
where $x_{n,j}$ is the $j$th component of the point $\bsx_n$.
\end{proposition}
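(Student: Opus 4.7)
The plan is to start from the definition $\LL_{2,N}^2(\cP_{N,s}) = \int_{[0,1]^s} \Delta(\bst)^2 \rd \bst$ with $\Delta(\bst) = A_N([\bszero,\bst),\cP_{N,s})/N - t_1 \cdots t_s$, expand the square, and integrate the three resulting terms separately. Concretely,
\[
\Delta(\bst)^2 = \left(\frac{A_N([\bszero,\bst),\cP_{N,s})}{N}\right)^{\!2} - 2\,\frac{A_N([\bszero,\bst),\cP_{N,s})}{N}\,\prod_{j=1}^s t_j + \prod_{j=1}^s t_j^2.
\]
The last summand integrates to $\prod_{j=1}^s \int_0^1 t_j^2\,\rd t_j = 1/3^s$, giving the first term in the stated identity, and the first summand is kept verbatim as the last term in the identity.

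The only substantive step is the cross term. I would write $A_N([\bszero,\bst),\cP_{N,s}) = \sum_{n=0}^{N-1}\prod_{j=1}^s \mathbf{1}_{[0,t_j)}(x_{n,j})$, interchange the finite sum with the integral, and use Fubini to factor the $s$-fold integral into one-dimensional integrals. The key observation is that $\mathbf{1}_{[0,t_j)}(x_{n,j}) = 1$ exactly when $x_{n,j} < t_j$, so
\[
\int_{[0,1]^s} \mathbf{1}_{[0,t_j)}(x_{n,j})\, t_j \,\rd\bst = \prod_{j=1}^s \int_{x_{n,j}}^1 t_j \,\rd t_j = \prod_{j=1}^s \frac{1-x_{n,j}^2}{2}.
\]
Multiplying by $-2/N$ and summing over $n$ gives precisely the middle term in Warnock's formula.

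Adding the three contributions yields the claimed identity. I do not expect any real obstacle: the proof is a direct computation, and the only point that deserves care is correctly identifying the lower limit $x_{n,j}$ in the one-dimensional integral arising from the indicator function. No properties of digital nets, Walsh functions, or the structure of $\cP_{N,s}$ are used — the formula holds for an arbitrary finite point set in $[0,1)^s$.
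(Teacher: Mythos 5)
Your proof is correct and is exactly the standard derivation of Warnock's formula, which the paper does not reprove but simply cites from Warnock and \cite[Proposition~2.15]{DP10}; expanding $\Delta(\bst)^2$, integrating $\prod_j t_j^2$ to get $1/3^s$, and evaluating the cross term via $\int_0^1 1_{[0,t_j)}(x_{n,j})\,t_j\,\rd t_j=\int_{x_{n,j}}^1 t_j\,\rd t_j=(1-x_{n,j}^2)/2$ is all that is needed. The only blemish is notational: in your displayed equation the left-hand side should carry the product $\prod_{j=1}^s$ inside the integrand before Fubini is applied, but the surrounding text makes the intent clear and the argument is sound.
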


We need the Walsh series expansion of the indicator function $1_{[0,t)}(x)$ (which is $1$ for $0 \le x < t$ and $0$ otherwise), which was first given by Fine~\cite{fine} and which is nowadays well known. To state this expansion we need a weight function $\mu$ defined for non-negative integers. Put $\mu(0)=0$ and for $k \in \NN$ with base $2$ representation
$k = \kappa_0 + \kappa_1 2 + \cdots + \kappa_{a-2} 2^{a-2} + 2^{a-1}$ with $\kappa_i \in \{0,1\}$ put $\mu(k):= a$.

Then for $x \in [0,1]$ the Walsh series expansion of $1_{[0,t)}(x)$ is given as
\begin{eqnarray*}\label{walshcoeffind}
1_{[0,t)}(x) & \simeq & 1-x    \\ & &+ \sum_{k=1}^\infty \frac{1}{2^{\mu(k)+1}}
\left(\sum_{r=1}^\infty \frac{1}{2^r} \wal_{k \oplus 2^{r+ \mu(k)-1}}(x) - \wal_{k
\oplus 2^{\mu(k)-1}}(x) \right) \wal_k(t).\nonumber
\end{eqnarray*}

Using Parseval's identity we therefore obtain
\begin{eqnarray*}
\lefteqn{\int_0^1 1_{[0,t)}(x) 1_{[0,t)}(y) \, \rd t}\\  & = &
(1-x)(1-y) + \sum_{k=1}^\infty \frac{1}{2^{2\mu(k)+2}} \left(\wal_{k \oplus
2^{\mu(k)-1}}(x) - \sum_{r=1}^\infty \frac{1}{2^r} \wal_{k \oplus 2^{r+\mu(k)-1}}(x)
\right)  \\ & &\hspace{4cm} \times\left(\wal_{k \oplus 2^{\mu(k)-1}}(y) -
\sum_{r=1}^\infty \frac{1}{2^r} \wal_{k \oplus 2^{r+\mu(k)-1}}(y)\right).
\end{eqnarray*}

Using the fact that $A_N([\bszero,\bst),\cP_{N,s}) = \sum_{n=0}^{N-1} \prod_{j=1}^s
1_{[0,t_j)}(x_{n,j})$ it follows that
\begin{eqnarray*}
\int_{[0,1]^s} \left( \frac{A_N([\bszero,\bst), \cP_{N,s})}{N}\right)^2
\,\rd \bst & = &  \frac{1}{N^2} \sum_{n,m=0}^{N-1} \prod_{j=1}^s \int_0^1
1_{[0,t_j)}(x_{n,j}) 1_{[0,t_j)}(x_{m,j}) \, \rd t_j.
\end{eqnarray*}
Combining the last two equations we obtain
\begin{eqnarray}\label{walshseriesA}
\lefteqn{\int_{[0,1]^s} \left( \frac{A_N([\bszero,\bst),
\cP_{N,s})}{N}\right)^2 \,\rd \bst } \nonumber \\ & = & \frac{1}{N^2}
\sum_{n,m=0}^{N-1} \prod_{j=1}^s \Bigg[(1-x_{n,j})(1-x_{m,j})
\nonumber \\ &&  + \sum_{k=1}^\infty \frac{1}{2^{2\mu(k)+2}} \left(\wal_{k
\oplus 2^{\mu(k)-1}}(x_{n,j}) - \sum_{r=1}^\infty \frac{1}{2^r} \wal_{k \oplus
2^{r+\mu(k)-1}}(x_{n,j}) \right) \nonumber \\ &&\hspace{1cm}\times \left(\wal_{k
\oplus 2^{\mu(k)-1}}(x_{m,j}) - \sum_{r=1}^\infty \frac{1}{2^r} \wal_{k \oplus
2^{r+\mu(k)-1}}(x_{m,j}) \right) \Bigg].
\end{eqnarray}

The Walsh series representation of $(1-x_{n,j})(1-x_{m,j})$ can
easily be found. For example it was shown in \cite[Lemma~A.22]{DP10} that
\begin{equation}\label{walshseriesx}
x-\frac{1}{2} = - \sum_{a=1}^\infty \frac{1}{2^{a+1}} \wal_{2^{a-1}}(x).
\end{equation}
Using \eqref{walshseriesA} together with the last equality we obtain
the Walsh series representation of $\int_{[0,1]^s} \left(
\frac{A_N([\bszero,\bst), \cP_{N,s})}{N}\right)^2 \rd \bst$.

Using again \eqref{walshseriesx} and Proposition~\ref{pr1} we can now obtain the Walsh series
expansion of the squared $\LL_2$ discrepancy, which is given by
\begin{eqnarray}\label{l2fo1}
\lefteqn{\LL_{2,N}^2(\cP_{N,s})}  \\ &= & \frac{1}{3^s} -
\frac{2}{N} \sum_{n=0}^{N-1}  \prod_{j=1}^s \Bigg(\frac{1}{3} +
\sum_{a=1}^\infty \frac{1}{2^{a+2}} \wal_{2^{a-1}}(x_{n,j})  \nonumber \\ && \hspace{4cm}- \sum_{1
\le a < a'} \frac{1}{2^{a+a'+2}} \wal_{2^{a-1}\oplus2^{a'-1}}(x_{n,j}) \Bigg) \nonumber \\
&& + \frac{1}{N^2} \sum_{n,m=0}^{N-1} \prod_{j=1}^s
\Bigg[\left(\frac{1}{2} + \sum_{a=1}^\infty \frac{1}{2^{a+1}}
\wal_{2^{a-1}}(x_{n,j})\right)\left(\frac{1}{2} + \sum_{a=1}^\infty \frac{1}{2^{a+1}} \wal_{2^{a-1}}(x_{m,j})\right) \nonumber \\
&& \hspace{1cm}+ \sum_{k=1}^\infty \frac{1}{2^{2\mu(k)+2}} \left( \wal_{k \oplus
2^{\mu(k)-1}}(x_{n,j}) - \sum_{r=1}^\infty \frac{1}{2^r} \wal_{k \oplus
2^{r+\mu(k)-1}}(x_{n,j})\right) \nonumber \\ && \hspace{3cm}\times\left(\wal_{k
\oplus 2^{\mu(k)-1}}(x_{m,j}) - \sum_{r=1}^\infty \frac{1}{2^r} \wal_{k \oplus
2^{r+\mu(k)-1}}(x_{m,j}) \right) \Bigg].\nonumber
\end{eqnarray}
The following lemma can now be obtained upon comparing coefficients.
\begin{lemma}\label{lem_r}
For any $\cP_{N,s}=\{\bsx_0,\ldots, \bsx_{N-1}\}$ in $[0,1)^s$ we obtain
\begin{eqnarray*}
\LL_{2,N}^2(\cP_{N,s}) &= & \frac{1}{3^s} - \frac{2}{N}
\sum_{n=0}^{N-1} \sum_{\bsk \in \NN_0^s} r(\bsk,\bszero)
\wal_{\bsk}(\bsx_n) \\ && + \frac{1}{N^2} \sum_{n,m=0}^{N-1}
\sum_{\bsk,\bsl \in \NN_0^s} r(\bsk,\bsl) \wal_{\bsk}(\bsx_n)
\wal_{\bsl}(\bsx_m),
\end{eqnarray*}
where $\bsk = (k_1,\ldots, k_s)$, $\bsl = (l_1,\ldots, l_s)$,
$r(\bsk,\bsl) = \prod_{j=1}^s r(k_j,l_j)$. Further we have $r(k,l) =
r(l,k)$ and for non-negative integers $0 \le l \le k$ with $k =
2^{a_1-1} + \cdots + 2^{a_v-1}$ with $a_1 > \cdots > a_v > 0$ and $l
= 2^{b_1-1} + \cdots + 2^{b_w-1}$ with $b_1
> \cdots > b_v > 0$ we have
\begin{equation*}
r(k,l) = \left\{\begin{array}{ll}
\frac{1}{3} & \mbox{if } k = l =
0, \\
\frac{1}{2^{a_1+2}} & \mbox{if } v = 1 \mbox{ and } l =0, \\
-\frac{1}{2^{a_1 + a_2+2}} & \mbox{if } v = 2 \mbox{ and } l = 0,\\
 -\frac{1}{2^{a_1 + a_2 +
2}} & \mbox{if } v = w+2
> 2 \mbox{ and }
a_3 = b_1, \ldots, a_v = b_{v-2}, \\
\frac{1}{3\cdot 4^{a_1}} & \mbox{if } k = l > 0, \\
\frac{1}{2^{a_1+b_1+2}} & \mbox{if } v = w, a_1 \neq b_1 \mbox{ and
} a_2 = b_2, \ldots, a_v = b_v, \\ 0 & \mbox{otherwise.}
\end{array} \right.
\end{equation*}
\end{lemma}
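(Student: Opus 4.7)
The plan is to read off the Walsh coefficients $r(\bsk,\bsl)$ directly from the already-derived identity \eqref{l2fo1}. Since the integrand factorizes across coordinates and $\wal_{\bsk}(\bsx) = \prod_j \wal_{k_j}(x_j)$, it suffices to identify $r(k,0)$ and $r(k,l)$ in the one-dimensional bracket inside the product; the multiplicativity $r(\bsk,\bsl) = \prod_j r(k_j,l_j)$ is then immediate, and the symmetry $r(k,l) = r(l,k)$ follows from the symmetric roles of $\bsx_n$ and $\bsx_m$. I would handle the single sum first: its one-dimensional bracket is already displayed as a Walsh series $\frac{1}{3} + \sum_{a \ge 1} \frac{1}{2^{a+2}} \wal_{2^{a-1}}(x) - \sum_{1 \le a < a'} \frac{1}{2^{a+a'+2}} \wal_{2^{a-1} \oplus 2^{a'-1}}(x)$, and the coefficients of $\wal_k(x)$ at $k = 0$, $k = 2^{a-1}$, $k = 2^{a_1-1} + 2^{a_2-1}$, and $k$ with three or more bits reproduce the values of $r(k,0)$ stated in the lemma.

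For the double sum I would split the one-dimensional bracket as $A(x)A(y) + S(x,y)$ where
\[
A(x) = \frac{1}{2} + \sum_{a \ge 1} \frac{1}{2^{a+1}} \wal_{2^{a-1}}(x), \qquad S(x,y) = \sum_{K \ge 1} \frac{1}{2^{2\mu(K)+2}} F_K(x) F_K(y),
\]
with $F_K(x) = \wal_{K \oplus 2^{\mu(K)-1}}(x) - \sum_{r \ge 1} 2^{-r} \wal_{K + 2^{r+\mu(K)-1}}(x)$. The crucial structural point is that the Walsh indices appearing in $F_K$ split cleanly: the index $K \oplus 2^{\mu(K)-1}$ lies strictly below $2^{\mu(K)-1}$ (call this the low branch) while each index $K + 2^{r+\mu(K)-1}$ lies at or above $2^{\mu(K)}$ (the high branch). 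Consequently, for a fixed target index $k$ with top bit $2^{a_1-1}$, the set of $K$ for which $\wal_k$ appears in $F_K$ consists of at most two families: the low branch $K = k + 2^{c-1}$ for any $c > a_1$, and (only when $k$ has $v \ge 2$ bits) the single high branch value $K = k - 2^{a_1-1}$ with $\mu(K) = a_2$ and $r = a_1 - a_2$. The coefficient $r(k,l)$ in $S(x,y)$ is then obtained by enumerating the four branch combinations for the pair $(x,y)$ and requiring that the associated values of $K$ coincide.

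A short bit-pattern check handles each case of the lemma. For example, $k = l > 0$ admits the (low, low) combination with geometric sum $\sum_{c > a_1} 2^{-2c-2} = 1/(3 \cdot 4^{a_1+1})$; adding to this the (high, high) contribution $1/4^{a_1+1}$ (present only when $v \ge 2$) and the $A(x)A(y)$ contribution $1/4^{a_1+1}$ (present only when $v = 1$) yields in both cases the uniform value $1/(3 \cdot 4^{a_1})$. The non-trivial cross cases are similar: $v = w + 2$ with $a_3 = b_1, \ldots, a_v = b_{v-2}$ admits only the (high on $x$, low on $y$) combination with $K = 2^{a_2-1} + \cdots + 2^{a_v-1}$, giving $-1/2^{a_1 + a_2 + 2}$, while $v = w$ with $a_1 \ne b_1$ but $a_j = b_j$ for $j \ge 2$ admits only (high, high) with the same $K$, giving $1/2^{a_1 + b_1 + 2}$. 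The main obstacle is the case analysis itself: one must carefully verify that for every pair $(k,l)$ not appearing in the lemma, no single $K$ simultaneously produces $k$ in $F_K(x)$ and $l$ in $F_K(y)$ via any pair of branches, so that $r(k,l) = 0$. This verification is a routine (but slightly tedious) comparison of bit patterns, and once it is completed the lemma follows.
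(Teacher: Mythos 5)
Your proposal is correct and follows essentially the same route as the paper: the paper's own proof simply reads off the coefficients from the Walsh expansion \eqref{l2fo1} and states that "the result follows by checking all cases," which is precisely the comparison-of-coefficients argument you carry out. Your branch decomposition of $F_K$ (low branch $K\oplus 2^{\mu(K)-1}<2^{\mu(K)-1}$ versus high branch $K+2^{r+\mu(K)-1}\ge 2^{\mu(K)}$) is a clean way to organize the case check that the paper leaves implicit, and the sample computations you give (e.g.\ $\tfrac{1}{3\cdot 4^{a_1+1}}+\tfrac{1}{4^{a_1+1}}=\tfrac{1}{3\cdot 4^{a_1}}$ for $k=l$) are accurate.
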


\begin{proof}
As already mentioned, the result follows from  \eqref{l2fo1} upon comparing coefficients. For instance we have
\begin{eqnarray*}
\frac{1}{3} + \sum_{a=1}^\infty \frac{1}{2^{a+2}} \wal_{2^{a-1}}(x_{n,j})- \sum_{1
\le a < a'} \frac{1}{2^{a+a'+2}} \wal_{2^{a-1}\oplus2^{a'-1}}(x_{n,j})\\ =   \sum_{k=0}^\infty r(k,0) \wal_k(x_{n,j})
\end{eqnarray*}
with $$r(k,0)=\left\{
\begin{array}{ll}
\frac{1}{3} & \mbox{ if } k=0,\\
\frac{1}{2^{a+2}} & \mbox{ if } k=2^{a-1},\\
-\frac{1}{2^{a+a'+2}} & \mbox{ if } k=2^{a-1} \oplus 2^{a'-1},\\
0 & \mbox{ in all other cases.}
\end{array}\right.$$
The result follows by checking all cases.
\end{proof}

We can simplify the above formula further. But first we recall what we mean by a digitally shifted digital net:

\begin{definition}\rm
Let $\cP_{2^m,s}=\{\bsx_0,\ldots, \bsx_{2^m-1}\}$ be a digital net over $\FF_2$ and let $\bssigma \in [0,1)^s$. Then we call the point set $\cP_{2^m,s}(\bssigma)=\{\bsx_0 \oplus \bssigma, \ldots, \bsx_{2^m-1} \oplus \bssigma\}$ a {\it digitally shifted digital net over $\FF_2$}.
\end{definition}

In this paper we will only consider digital shifts which are dyadic rationals. Since the points of a digital net are also dyadic rationals, the operation $\oplus$ is well defined.

\begin{lemma}\label{lem_l2formula}
We have:
\begin{itemize}
\item The squared $\LL_2$ discrepancy of a point set $\cP_{N,s}=\{\bsx_0,\ldots, \bsx_{N-1}\}$ in $[0,1)^s$ can be written as
\begin{equation*}
\LL^2_{2,N}(\cP_{N,s}) = \sum_{\bsk,\bsl \in \NN_0^s \setminus\{\bszero\}}
r(\bsk,\bsl) \frac{1}{N}\sum_{n=0}^{N-1} \wal_{\bsk}(\bsx_n)
\frac{1}{N} \sum_{m=0}^{N-1} \wal_{\bsl}(\bsx_{m}),
\end{equation*}
where the coefficients $r(\bsk,\bsl)$ are given as in Lemma~\ref{lem_r}.

\item If $\cP_{2^m,s}$ is a digital net over $\FF_2$ with generating matrices $C_1,\ldots, C_s \in \mathbb{F}_2^{p \times m}$ we have
\begin{equation*} \LL^2_{2,2^m}(\cP_{2^m,s}) =
\sum_{\bsk,\bsl \in \Dcal^\ast} r(\bsk,\bsl),
\end{equation*}
where $\Dcal^\ast=\Dcal\setminus \{\bszero\}$ and where $\Dcal$ is the so-called dual net given by $$\Dcal=\{(k_1,\ldots,s) \in \NN_0^s\, : \, C_1^{\top}\vec{k}_1+\cdots+C_s^{\top}\vec{k}_s=\vec{0}\},$$ where for $k \in \NN_0^s$ with base 2 expansion $k=\kappa_0+\kappa_1 2+\kappa_2 2^2+\cdots$ we put $\vec{k}=(\kappa_0,\ldots,\kappa_{p-1})^{\top}$.

\item If $\cP_{2^m,s}(\bssigma)$ is a digital net over $\FF_2$ digitally shifted by digital shift $\bssigma$ we have
\begin{equation*} \LL^2_{2,2^m}(\cP_{2^m,s}(\bssigma)) =
\sum_{\bsk,\bsl \in \Dcal^\ast} r(\bsk,\bsl) \wal_{\bsk}(\bssigma) \wal_{\bsl}(\bssigma),
\end{equation*}
where $\Dcal^\ast$ denotes the dual net excluding $\bszero$.
\end{itemize}
\end{lemma}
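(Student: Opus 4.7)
My plan is to derive all three statements directly from Lemma~\ref{lem_r} using only the character property \eqref{charprop} and the multiplicativity $\wal_{\bsk}(\bsx \oplus \bsy) = \wal_{\bsk}(\bsx)\wal_{\bsk}(\bsy)$.

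For the first part, I would start from the expression in Lemma~\ref{lem_r} and split the single and double sums according to whether $\bsk$ and $\bsl$ are zero. The term $\bsk = \bszero$ in the middle sum contributes $r(\bszero,\bszero) = 1/3^s$, and the terms $\bsk = \bszero$ or $\bsl = \bszero$ in the double sum contribute $r(\bszero,\bszero) + 2\sum_{\bsk \neq \bszero} r(\bsk,\bszero) \tfrac{1}{N}\sum_n \wal_{\bsk}(\bsx_n)$, using the symmetry $r(\bsk,\bsl) = r(\bsl,\bsk)$ from Lemma~\ref{lem_r}. Plugging these into Lemma~\ref{lem_r} gives the cancellation
\begin{equation*}
\tfrac{1}{3^s} - \tfrac{2}{3^s} + \tfrac{1}{3^s} = 0
\end{equation*}
for the constant terms, and the linear terms in $\tfrac{1}{N}\sum_n \wal_{\bsk}(\bsx_n)$ cancel as $-2A + 2A = 0$. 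What remains is exactly the double sum over $\bsk,\bsl \in \NN_0^s \setminus \{\bszero\}$ asserted in the lemma.

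For the second part, I would apply the character property \eqref{charprop}, which says that for a digital net $\cP_{2^m,s}$ over $\FF_2$ with generating matrices $C_1,\ldots,C_s$,
\begin{equation*}
\frac{1}{2^m}\sum_{n=0}^{2^m-1} \wal_{\bsk}(\bsx_n) = \begin{cases} 1 & \text{if } \bsk \in \Dcal,\\ 0 & \text{otherwise.}\end{cases}
\end{equation*}
Inserting this into the formula from the first part immediately restricts the double sum to pairs $(\bsk,\bsl) \in \Dcal^\ast \times \Dcal^\ast$, where every summand is simply $r(\bsk,\bsl)$. For the third part, the multiplicativity of Walsh functions on dyadic rationals (as recalled in the excerpt, since both $\bsx_n$ and $\bssigma$ have finite dyadic expansions) gives
\begin{equation*}
\frac{1}{2^m}\sum_{n=0}^{2^m-1} \wal_{\bsk}(\bsx_n \oplus \bssigma) = \wal_{\bsk}(\bssigma) \cdot \frac{1}{2^m}\sum_{n=0}^{2^m-1} \wal_{\bsk}(\bsx_n),
\end{equation*}
which by the character property equals $\wal_{\bsk}(\bssigma)$ if $\bsk \in \Dcal$ and $0$ otherwise. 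Applying this to both the $\bsk$-sum and the $\bsl$-sum in the first part yields the stated formula with the factor $\wal_{\bsk}(\bssigma) \wal_{\bsl}(\bssigma)$.

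The main technical point is not any hard estimate but the careful bookkeeping in the cancellation in Part 1: one must correctly separate the contributions with $\bsk = \bszero$ in the single sum and the contributions with $\bsk = \bszero$ or $\bsl = \bszero$ in the double sum, and use the symmetry $r(\bsk,\bszero) = r(\bszero,\bsk)$ so that the cross terms cancel. Once Part 1 is established, Parts 2 and 3 follow essentially for free from the character property, and the whole lemma is proved.
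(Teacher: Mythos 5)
Your proposal is correct and follows essentially the same route as the paper: split off the $\bsk=\bszero$ term of the linear sum and the $(\bszero,\bszero)$ and cross terms of the double sum, use $r(\bsk,\bszero)=r(\bszero,\bsk)$ so that the constants give $3^{-s}-2\cdot 3^{-s}+3^{-s}=0$ and the linear contributions cancel, and then obtain Parts 2 and 3 from the character property \eqref{charprop} together with $\wal_{\bsk}(\bsx\oplus\bssigma)=\wal_{\bsk}(\bsx)\wal_{\bsk}(\bssigma)$. No gaps.
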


\begin{proof}
From $r(\bszero,\bszero)=3^{-s}$ and from the symmetry relation $r(\bsk,\bsl)=r(\bsl,\bsk)$ we obtain

\begin{eqnarray*}
\LL_{2,N}^2(\cP_{N,s}) &= & \frac{1}{3^s} - \frac{2}{N}
\sum_{n=0}^{N-1} \sum_{\bsk \in \NN_0^s} r(\bsk,\bszero)
\wal_{\bsk}(\bsx_n) \\ && + \frac{1}{N^2} \sum_{n,m=0}^{N-1}
\sum_{\bsk,\bsl \in \NN_0^s} r(\bsk,\bsl) \wal_{\bsk}(\bsx_n)
\wal_{\bsl}(\bsx_m) \\ & = & r(\bszero,\bszero)-2 r(\bszero,\bszero) +r(\bszero,\bszero)\\
&  & - \frac{1}{N^2}\sum_{n,m=0}^{N-1} \sum_{\bsk \in \NN^s} r(\bsk,\bszero) \wal_{\bsk}(\bsx_n) - \frac{1}{N^2} \sum_{n,m=0}^{N-1} \sum_{\bsl \in \NN^s} r(\bszero,\bsl) \wal_{\bsl}(\bsx_m)\\
& & + \frac{1}{N^2} \sum_{n,m=0}^{N-1}\sum_{\bsk,\bsl \in \NN_0^s \atop(\bsk,\bsl)\not=(\bszero,\bszero)} r(\bsk,\bsl) \wal_{\bsk}(\bsx_n)
\wal_{\bsl}(\bsx_m)\\
& = & \frac{1}{N^2} \sum_{n,m=0}^{N-1}\sum_{\bsk,\bsl \in \NN_0^s \setminus \{\bszero\}} r(\bsk,\bsl) \wal_{\bsk}(\bsx_n)
\wal_{\bsl}(\bsx_m)\\
& = & \sum_{\bsk,\bsl \in \NN_0^s \setminus\{\bszero\}} r(\bsk,\bsl) \frac{1}{N} \sum_{n=0}^{N-1} \wal_{\bsk}(\bsx_n)  \frac{1}{N} \sum_{m=0}^{N-1} \wal_{\bsl}(\bsx_m),
\end{eqnarray*}
which proves the first part. The second part follows immediately from the first part and the character property \eqref{charprop} of digital nets. The third part follows in the same manner as the second part using the additional equality $\wal_{\bsk}(\bsx \oplus \bssigma) = \wal_{\bsk}(\bsx) \wal_{\bsk}(\bssigma)$.
\end{proof}

\section{The proof of Theorem~\ref{thm1}}

We give the proof of our main result. Throughout this proof we assume that $\alpha \ge 3$ unless stated otherwise. We consider the construction of digital sequences $\cS_{\alpha s}$ based on \eqref{def_sob_mat} in dimension $\alpha s$ and apply the digit interlacing function $\mathscr{D}_\alpha^s(\cS_{\alpha s})$ of order $\alpha$. The sequence $\cS_s:=\mathscr{D}_\alpha^s(\cS_{\alpha s}):=(\boldsymbol{x}_0, \boldsymbol{x}_1,\ldots)$ in $[0,1)^s$ is an order $\alpha$ digital $(t,s)$-sequence with $t = \alpha \sum_{j=1}^s (e_j-1) + s {\alpha \choose 2}$. Using \eqref{eq_t_tprime}, $\mathscr{D}_\alpha^s(\cS_{\alpha s})$ is also an order $\alpha'$ digital $(t',s)$-sequence with $t' = \lceil t \alpha'/\alpha \rceil \le t$ for all $1 \le \alpha' \le \alpha$. Thus it is also an order $\alpha'$ digital $(t,s)$-sequence for all $1 \le \alpha' \le \alpha$, see Subsection~\ref{sec_gen_princ} for more details. Note that we have $t \ge s {\alpha \choose 2} \ge {3 \choose 2} = 3$.

Let $C_1, \ldots, C_s$ denote the generating matrices of the digital sequence $\cS_{s}$. Let $C_{j, \mathbb{N} \times m}$ denote the first $m$ columns of $C_j$. As explained in Subsection~\ref{sec_gen_princ}, only the first $\alpha m$ rows of $C_{j, \mathbb{N} \times m}$ can be nonzero and hence $C_j$ is of the form
$$
C_{j} = \left( \begin{array}{ccc}
           & \vline &  \\
  C_{j,\alpha m \times m} & \vline & D_{j,\alpha m \times \NN} \\
           & \vline &   \\ \hline
    & \vline & \\
 0_{\NN \times m}  & \vline &  F_{j,\NN \times \NN} \\
   &   \vline       &
\end{array} \right) \in \FF_2^{\NN \times \NN},
$$  where $0_{\NN \times m}$ denotes the $\NN \times m$ zero matrix. Note that the entries of each column of the   matrix $F_{j,\NN \times \NN}$ become eventually zero.

We use the first part of Lemma~\ref{lem_l2formula} to obtain
\begin{align}\label{fo1l2}
\LL^2_{2,N}(\cS_s)  = & \sum_{\bsk,\bsl \in \NN_0^s \setminus\{\bszero\}}
r(\bsk,\bsl) \frac{1}{N}\sum_{n=0}^{N-1} \wal_{\bsk}(\bsx_n)
\frac{1}{N} \sum_{m=0}^{N-1} \wal_{\bsl}(\bsx_{m}).
\end{align}
Let $N = 2^{m_1} + 2^{m_2} + \cdots + 2^{m_r}$ with $m_1 > m_2 > \cdots > m_r \ge 0$ (hence $r=S(N)$). We consider the point sets
\begin{equation*}
\cP_i:=\{\boldsymbol{x}_{2^{m_1}+\cdots + 2^{m_{i-1}}},\ldots, \boldsymbol{x}_{-1+2^{m_1} + \cdots + 2^{m_i}}\},
\end{equation*}
for $i=1,\ldots,r$, where for $i=1$ we define $2^{m_1} + \cdots + 2^{m_{i-1}} = 0$. Any $n \in \{2^{m_1}+\cdots + 2^{m_{i-1}}, \ldots ,-1+2^{m_1} + \cdots + 2^{m_i}\}$ can be written in the form $$n=2 ^{m_1}+\cdots+2^{m_{i-1}}+a=2^{m_{i-1}} \ell +a$$ with $a \in \{0,1,\ldots,2^{m_i}-1\}$ and $\ell=1+2^{m_{i}-m_{i-1}}+\cdots+2^{m_1-m_{i-1}}$ if $i > 1$ and $\ell =0$ for $i=1$. Hence the dyadic digit vector of $n$ is given by $$\vec{n}=(a_0,a_1,\ldots,a_{m_i-1},l_0,l_1,l_2,\ldots)^{\top}=:{\vec{a} \choose \vec{\ell}},$$ where $a_0,\ldots,a_{m_i-1}$ are the dyadic digits of $a$ and $l_0,l_1,l_2,\ldots$ are the dyadic digits of $\ell$. With this notation we have
$$C_j \vec{n}=\left( \begin{array}{c} C_{j,\alpha m_i \times m_i}  \vec{a} \\ 0 \\ 0 \\ \vdots \end{array} \right)  + \left( \begin{array}{c}
             \\
  D_{j,\alpha m \times \NN}  \\
          \\ \hline  \\
 F_{j,\NN \times \NN}  \\
   \end{array} \right) \vec{\ell}.$$ For the point set $\cP_i$ under consideration, the vector
\begin{equation}\label{dig_shift}
\vec{\sigma}_{i,j}:=\left( \begin{array}{c}
             \\
  D_{j,\alpha m \times \NN}  \\
          \\ \hline  \\
 F_{j,\NN \times \NN}  \\
   \end{array} \right) \vec{\ell}
\end{equation}
is constant and its components become eventually zero (i.e., only a finite number of components is nonzero). Furthermore, $C_{j,\alpha m_i \times m_i}  \vec{a}$ for $a=0,1,\ldots,2^{m_i}-1$ and $j=1,\ldots,s$ generate an order $\alpha$ digital $(t,m_i,s)$-net over $\FF_2$ (which is also an order $\alpha'$ digital $(t,m_i,s)$-net over $\FF_2$ for $1 \le \alpha' \le \alpha$).

This means that the point set $\cP_i$ is a digitally shifted order $\alpha$ digital $(t,m_i,s)$-net over $\FF_2$  and the generating matrices
\begin{equation}\label{genmatpi}
C_{1, \alpha m_i \times m_i},\ldots, C_{s, \alpha m_i\times m_i}
\end{equation}
 of this digital net are the left upper $\alpha m_i \times m_i$ submatrices of the generating matrices $C_1,\ldots, C_s$ of the digital sequence. We denote the digital shift, which is given by \eqref{dig_shift}, by $\bssigma_i$. Note that all the coordinates of the digital shift are dyadic rationals since the components of $\vec{\sigma}_{i,j}$ become eventually zero.

Let $\mathcal{D}_i$ denote the dual net corresponding to the digital net with generating matrices \eqref{genmatpi}, i.e.,
\begin{equation*}
\mathcal{D}_i = \{\bsk=(k_1,\ldots,k_s) \in \mathbb{N}_0^s\,:\, C_{1, \alpha m_i \times m_i}^\top \vec{k}_1 + \cdots + C_{s, \alpha m_i \times m_i}^\top \vec{k}_s = \vec{0}\},
\end{equation*}
where for $k \in \NN_0$ with base 2 expansion $k = \kappa_0 + \kappa_1 2 + \kappa_2 2^2+\cdots$ we set $\vec{k} = (\kappa_0, \kappa_1,\ldots, \kappa_{\alpha m_i-1})^\top$. Set $\Dcal_i^{\ast}=\Dcal_i \setminus\{\bszero\}$. 

We now obtain a bound on the $\LL_2$ discrepancy using the dual nets $\mathcal{D}_i$.
\begin{lemma}\label{lem_proof_1}
Let $N = 2^{m_1} + 2^{m_2} + \cdots + 2^{m_r}$ where $m_1 > m_2 > \cdots > m_r \ge 0$. Using the notation from above, let
\begin{equation}\label{def_Jii}
\Jcal_{i,i'} = \{(\bsk,\bsl) \in \Dcal^\ast_i \times \Dcal^\ast_{i'} \ : \ r(\bsk,\bsl) \neq 0\}
\end{equation}
and
\begin{equation}\label{def_Jii_z}
\mathcal{J}_{i,i'}(z) = \{(\boldsymbol{k},\boldsymbol{l}) \in \mathcal{J}_{i,i'} \ : \ \mu(\boldsymbol{k}) + \mu(\boldsymbol{l}) = z\}.
\end{equation}
Then we have
\begin{equation}\label{eq_bound_Jiiz}
\LL_{2,N}^2(\cS_s) \ll_s \sum_{i,i'=1}^r \frac{2^{m_i}}{N} \frac{2^{m_{i'}}}{N} \sum_{z=m_i + m_{i'}-2t+2}^\infty \frac{|\mathcal{J}_{i,i'}(z)|}{2^z}.
\end{equation}
\end{lemma}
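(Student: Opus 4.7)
The plan is to start from the first identity of Lemma~\ref{lem_l2formula},
$$\LL^2_{2,N}(\cS_s) = \sum_{\bsk,\bsl \in \NN_0^s \setminus\{\bszero\}} r(\bsk,\bsl)\, \frac{1}{N}\sum_{n=0}^{N-1} \wal_{\bsk}(\bsx_n)\cdot \frac{1}{N} \sum_{m=0}^{N-1} \wal_{\bsl}(\bsx_m),$$
and to split each of the two character sums along the block decomposition $\{0,1,\ldots,N-1\}=\bigsqcup_{i=1}^r I_i$ that defines the digitally shifted nets $\cP_1,\ldots,\cP_r$. By the discussion preceding the lemma, $\cP_i=\cP_i^0\oplus\bssigma_i$ where $\cP_i^0$ is the digital net with generating matrices~\eqref{genmatpi}; combining the multiplicativity $\wal_{\bsk}(\bsx\oplus\bssigma_i)=\wal_{\bsk}(\bsx)\wal_{\bsk}(\bssigma_i)$ with the character property~\eqref{charprop} yields
$$\sum_{\bsx_n\in\cP_i}\wal_{\bsk}(\bsx_n)=\begin{cases} 2^{m_i}\,\wal_{\bsk}(\bssigma_i) & \text{if } \bsk\in\Dcal_i,\\ 0 & \text{otherwise.}\end{cases}$$
Interchanging summation orders produces
$$\LL^2_{2,N}(\cS_s) = \sum_{i,i'=1}^r \frac{2^{m_i}}{N}\frac{2^{m_{i'}}}{N}\sum_{\bsk\in\Dcal_i^\ast,\ \bsl\in\Dcal_{i'}^\ast} r(\bsk,\bsl)\,\wal_{\bsk}(\bssigma_i)\wal_{\bsl}(\bssigma_{i'}),$$
and only the pairs $(\bsk,\bsl)\in\Jcal_{i,i'}$ with $r(\bsk,\bsl)\ne 0$ contribute.

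Next I would pass to absolute values, using $|\wal_{\bsk}(\bssigma_i)|=1$ and the coordinatewise estimate
$$|r(\bsk,\bsl)|=\prod_{j=1}^s|r(k_j,l_j)|\ll_s 2^{-\mu(\bsk)-\mu(\bsl)},\qquad \mu(\bsk):=\sum_{j=1}^s\mu(k_j).$$
This is verified by running through the seven cases of Lemma~\ref{lem_r} and reading off $|r(k,l)|\le \tfrac{1}{3}\,2^{-\mu(k)-\mu(l)}$ in each non-vanishing case (the diagonal cases $k=l$ being the tightest, and the case $v=w+2$ with $a_3=b_1$ handled by using $a_2>a_3$ to absorb the extra factor $2^{-a_2}$).

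Finally, to obtain the lower limit $z\ge m_i+m_{i'}-2t+2$ in the summation over $z=\mu(\bsk)+\mu(\bsl)$, I would invoke~\eqref{eq_t_tprime} with $\alpha'=1$: the order $\alpha$ digital $(t,m_i,s)$-net $\cP_i^0$ is simultaneously an order $1$ digital $(\lceil t/\alpha\rceil,m_i,s)$-net, so the classical dual-net inequality for order $1$ nets gives $\mu(\bsk)\ge m_i-\lceil t/\alpha\rceil+1\ge m_i-t+1$ for every $\bsk\in\Dcal_i^\ast$, and likewise $\mu(\bsl)\ge m_{i'}-t+1$ for $\bsl\in\Dcal_{i'}^\ast$. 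Grouping the sum over $(\bsk,\bsl)\in\Jcal_{i,i'}$ by the value $z=\mu(\bsk)+\mu(\bsl)$ and discarding the (automatically empty) pairs with $z<m_i+m_{i'}-2t+2$ produces~\eqref{eq_bound_Jiiz}.

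The main obstacle is the book-keeping of the coefficient estimate $|r(k,l)|\ll 2^{-\mu(k)-\mu(l)}$ across the seven cases of Lemma~\ref{lem_r}; the reduction of the higher-order dual-net bound to $\mu(\bsk)\ge m_i-t+1$ via~\eqref{eq_t_tprime} and the character-sum decomposition are, by contrast, essentially mechanical once the block structure of the first $N$ points is in place.
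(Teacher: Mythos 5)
Your proposal is correct and follows essentially the same route as the paper: decompose the first $N$ points into the $r$ digitally shifted nets, apply the character property together with $\wal_{\bsk}(\bsx\oplus\bssigma_i)=\wal_{\bsk}(\bsx)\wal_{\bsk}(\bssigma_i)$, interchange sums, bound $|r(\bsk,\bsl)|\ll_s 2^{-\mu(\bsk)-\mu(\bsl)}$ via the case analysis of Lemma~\ref{lem_r}, and use the order-$1$ dual-net property to force $\mu(\bsk)>m_i-t$ and $\mu(\bsl)>m_{i'}-t$ before regrouping by $z=\mu(\bsk)+\mu(\bsl)$. The only cosmetic difference is that you route the lower bound on $\mu$ through \eqref{eq_t_tprime} with $\alpha'=1$, whereas the paper argues directly from the linear independence condition in Definition~\ref{def_net}; both give the same starting index $z\ge m_i+m_{i'}-2t+2$.
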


\begin{proof}
By the character property \eqref{charprop} we have
\begin{align*}
\frac{1}{2^{m_i}} \sum_{n=2^{m_1}+\cdots + 2^{m_{i-1}}}^{-1+2^{m_1}+\cdots + 2^{m_i}} \wal_{\bsk}(\bsx_n) = & \left\{\begin{array}{ll} \wal_{\bsk}(\bssigma_i) & \mbox{if } \bsk \in \mathcal{D}_i, \\ 0 & \mbox{if } \bsk \notin \mathcal{D}_i, \end{array} \right.
\end{align*}
where again for $i=1$ we set $2^{m_1}+\cdots + 2^{m_{i-1}} = 0$, and hence
\begin{align*}
\frac{1}{N}\sum_{n=0}^{N-1} \wal_{\bsk}(\bsx_n) = & \sum_{i=1}^r \frac{2^{m_i}}{N} \frac{1}{2^{m_i}} \sum_{n=2^{m_1}+\cdots + 2^{m_{i-1}}}^{-1+2^{m_1}+\cdots + 2^{m_i}} \wal_{\bsk}(\bsx_n)\\
= & \sum_{i=1 \atop \bsk \in \mathcal{D}_i}^r \frac{2^{m_i}}{N} \wal_{\bsk}(\bssigma_i).
\end{align*}
Inserting this into \eqref{fo1l2} and interchanging the order of summation we obtain
\begin{align}\label{def_sum_bound}
\LL^2_{2,N}(\cS_s)  = &  \sum_{i, i' =1}^r \frac{2^{m_i}}{N} \frac{2^{m_{i'}}}{N} \sum_{(\boldsymbol{k},\boldsymbol{l}) \in \mathcal{D}^{\ast}_i  \times \mathcal{D}^{\ast}_{i'}} r(\boldsymbol{k}, \boldsymbol{l}) \wal_{\boldsymbol{k}}(\bssigma_i) \wal_{\bsl}(\bssigma_{i'}) \nonumber \\ \le & \sum_{i, i' =1}^r \frac{2^{m_i}}{N} \frac{2^{m_{i'}}}{N} \sum_{(\boldsymbol{k},\boldsymbol{l}) \in \Jcal_{i,i'}} |r(\boldsymbol{k}, \boldsymbol{l})|,
\end{align}
since $|\wal_{\bsk}(\bsx)|=1$ for any $\bsx$.

For a vector $\bsk=(k_1,\ldots,k_s) \in \NN_0^s$ we put $\mu(\bsk)=\sum_{j=0}^s \mu(k_j)$, where, as already defined earlier, the function $\mu: \mathbb{N}_0 \to \mathbb{N}_0$ is defined by $\mu(0)=0$ and for $k = \kappa_0 + \kappa_1 2 + \cdots + \kappa_{a-2} 2^{a-2} + 2^{a-1}$ with $\kappa_j \in\{0,1\}$ by $\mu(k)=a$.

According to the definition of $r(\bsk,\bsl)$ in Lemma~\ref{lem_r} for $(\bsk,\bsl) \in \Jcal_{i,i'}$ we have $$|r(\bsk,\bsl)| \le \frac{1}{3^s  2^{\mu(\bsk) + \mu(\bsl)}}.$$ Thus we obtain from \eqref{def_sum_bound}
\begin{equation}\label{def_sum_bound2}
\LL_{2,N}^2(\cS_s) \ll_s  \sum_{i,i'=1}^r \frac{2^{m_i}}{N} \frac{2^{m_{i'}}}{N} \sum_{(\bsk,\bsl) \in \Jcal_{i,i'}} \frac{1}{2^{\mu(\bsk) + \mu(\bsl)}}.
\end{equation}

Now we re-order the sum over all $(\boldsymbol{k},\boldsymbol{l}) \in \Jcal_{i,i'}$ according to the value of $\mu(\boldsymbol{k}) + \mu(\boldsymbol{l})$. 

Assume that $\bsk=(k_1,\ldots,k_s) \in \Dcal_i^\ast$. Let $k_j=\kappa_{j,0}+\kappa_{j,1} 2 +\cdots +\kappa_{j,a_j-2} 2^{a_j-2}+2^{a_j -1}$ with $a_j=\mu(k_j)$ for $j=1,\ldots,s$. Let further $\vec{c}_{j,u}$ denote the $u$th row vector of the matrix $C_{j,\alpha m_i \times m_i}$. Then $$C_{1, \alpha m_i \times m_i}^\top \vec{k}_1 + \cdots + C_{s, \alpha m_i \times m_i}^\top \vec{k}_s = \vec{0}$$ is equivalent to $$\sum_{j=1}^s \left(\sum_{u=0}^{a_j-2} \vec{c}_{j,u+1}^{\ \top} \kappa_{j,u} + \vec{c}_{j,a_j-1}^{\ \top}\right)=\vec{0}.$$ Hence it follows from the linear independence property for the row vectors of generating matrices of digital nets in Definition~\ref{def_net} that $$\mu(\bsk)=a_1+\cdots+a_s > m_i -t.$$ In the same way $\bsl \in \Dcal_{i'}^\ast$ implies that $\mu(\boldsymbol{l}) > m_{i'}-t$. Hence $(\bsk,\bsl)\in \Dcal^\ast_i \times \Dcal^\ast_{i'}$ implies $\mu(\bsk) + \mu(\bsl) \ge m_i + m_{i'} -2t+2$.

Thus for the innermost sum in \eqref{def_sum_bound2} we have $$\sum_{(\bsk,\bsl) \in \mathcal{J}_{i,i'}} \frac{1}{2^{\mu(\bsk)+\mu(\bsl)}} = \sum_{z= m_i + m_{i'} - 2t + 2}^\infty \frac{|\Jcal_{i,i'}(z)|}{2^z}.$$ By substituting this result into \eqref{def_sum_bound2} the result follows.
\end{proof}

To obtain a bound on the right-hand side of \eqref{eq_bound_Jiiz}, we first obtain a bound on the number of elements in the set $\Jcal_{i,i'}(z)$. We do this in the next six lemmas.

\begin{lemma}\label{lem_bound_factors}
Using the notation from above, we have
\begin{equation}\label{bdJii}
|\mathcal{J}_{i,i'}(z)| = \sum_{z_1=m_i-t+1}^{z-m_{i'} +t-1} |\{(\bsk,\bsl) \in \mathcal{J}_{i,i'} \ : \ \mu(\bsk)=z_1 \ \mbox{ and } \ \mu(\bsl)=z-z_1\}|.
\end{equation}
\end{lemma}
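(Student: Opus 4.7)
The plan is to split the set $\mathcal{J}_{i,i'}(z)$ according to the value $z_1 := \mu(\bsk)$. Since $\mu(\bsl) = z - \mu(\bsk) = z - z_1$ is then determined, we obtain the disjoint decomposition
\begin{equation*}
\mathcal{J}_{i,i'}(z) = \bigsqcup_{z_1} \{(\bsk,\bsl) \in \mathcal{J}_{i,i'} \ : \ \mu(\bsk) = z_1 \ \mbox{and} \ \mu(\bsl) = z - z_1\},
\end{equation*}
and counting gives the desired identity provided we justify the range $m_i - t + 1 \le z_1 \le z - m_{i'} + t - 1$.

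Establishing this range is the only substantive step. For the lower bound, I would argue exactly as in the last paragraph of the proof of Lemma~\ref{lem_proof_1}: if $\bsk = (k_1,\ldots,k_s) \in \mathcal{D}^*_i$, writing each $k_j$ in base $2$ as $k_j = \kappa_{j,0} + \kappa_{j,1} 2 + \cdots + \kappa_{j,a_j-2} 2^{a_j-2} + 2^{a_j-1}$ with $a_j = \mu(k_j)$, the dual net condition $\sum_{j=1}^s C_{j,\alpha m_i \times m_i}^\top \vec{k}_j = \vec{0}$ rewrites as a nontrivial linear dependence among at most $a_1 + \cdots + a_s = \mu(\bsk)$ row vectors of $C_{1,\alpha m_i \times m_i},\ldots,C_{s,\alpha m_i \times m_i}$. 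By the order $\alpha \ge 1$ digital $(t,m_i,s)$-net property in Definition~\ref{def_net} (applied with $\alpha'=1$), any such linear dependence forces $\mu(\bsk) > m_i - t$, that is $z_1 \ge m_i - t + 1$. The symmetric argument applied to $\bsl \in \mathcal{D}^*_{i'}$ yields $\mu(\bsl) \ge m_{i'} - t + 1$, which translates into the upper bound $z_1 \le z - m_{i'} + t - 1$.

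Contributions outside this window vanish, so extending or restricting the sum over $z_1$ to this range is harmless, and the identity in the lemma follows. The main (and essentially only) obstacle is the clean application of the linear-independence condition from Definition~\ref{def_net}, which is already established in the paper; everything else is bookkeeping.
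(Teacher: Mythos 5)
Your proposal is correct and follows essentially the same route as the paper: decompose $\mathcal{J}_{i,i'}(z)$ by the value of $\mu(\bsk)$ and then restrict the range of $z_1$ using the fact, already established in the proof of Lemma~\ref{lem_proof_1}, that $\bsk\in\mathcal{D}_i^{\ast}$ forces $\mu(\bsk)>m_i-t$ and $\bsl\in\mathcal{D}_{i'}^{\ast}$ forces $\mu(\bsl)>m_{i'}-t$. Nothing further is needed.
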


\begin{proof}
We have 
\begin{equation*}
|\mathcal{J}_{i,i'}(z)| = \sum_{z_1=0}^{z} |\{(\bsk,\bsl) \in \mathcal{J}_{i,i'} \ : \ \mu(\bsk)=z_1 \ \mbox{ and } \ \mu(\bsl)=z-z_1\}|.
\end{equation*}
Now $(\boldsymbol{k}, \boldsymbol{l}) \in \mathcal{J}_{i,i'}$ implies $\bsk \in \mathcal{D}_i^\ast$ and $\bsl \in \mathcal{D}_{i'}^\ast$. We already showed in the proof of Lemma~\ref{lem_proof_1} that
 $\bsk \in \mathcal{D}_i^{\ast}$ implies that $\mu(\boldsymbol{k}) > m_{i}-t$ and $\bsl \in \mathcal{D}_{i'}^{\ast}$ implies that $\mu(\boldsymbol{l}) > m_{i'}-t$. Thus we only need to consider the case where $z_1 > m_i-t$ and $z-z_1 > m_{i'} -t$ and hence the result follows.
\end{proof}

\begin{lemma}\label{lem5neu}
Using the notation from above, we have
\begin{eqnarray*}
\lefteqn{|\mathcal{J}_{i,i'}(z)|}\\ & \le & \sum_{z_1=m_i-t+1}^{z-m_{i'} +t-1} \min\left\{|\{\boldsymbol{k} \in \mathcal{D}_i^{\ast}\,:\, \mu(\boldsymbol{k}) = z_1\}| \max_{\satop{\boldsymbol{k} \in \mathcal{D}_i^{\ast}}{\mu(\boldsymbol{k}) = z_1}}|R_{i,i'}^{(1)}(\boldsymbol{k}, z-z_1)|,\right.\\
&& \hspace{3cm}\left. |\{\boldsymbol{l} \in \mathcal{D}_{i'}^{\ast}\,:\, \mu(\boldsymbol{k}) = z-z_1\}| \max_{\satop{\boldsymbol{l} \in \mathcal{D}_{i'}^{\ast}}{\mu(\boldsymbol{l}) = z-z_1}}|R_{i,i'}^{(2)}(\boldsymbol{l}, z_1)|\right\},
\end{eqnarray*}
where
\begin{eqnarray*}
R_{i,i'}^{(1)}(\boldsymbol{k}, z-z_1) & = & \{\boldsymbol{l} \in \mathcal{D}^\ast_{i'}\,:\, (\boldsymbol{k},\boldsymbol{l}) \in \mathcal{J}_{i,i'}(z) \mbox{ and } \mu(\boldsymbol{l})=z-z_1\}\\
R_{i,i'}^{(2)}(\boldsymbol{l}, z_1) & = & \{\boldsymbol{k} \in \mathcal{D}^\ast_{i}\,:\, (\boldsymbol{k},\boldsymbol{l}) \in \mathcal{J}_{i,i'}(z) \mbox{ and } \mu(\boldsymbol{k})=z_1\}.
\end{eqnarray*} 

\end{lemma}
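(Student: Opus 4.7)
The proof is an elementary double-counting argument building directly on the previous Lemma~\ref{lem_proof_1} and Lemma~\ref{lem_bound_factors}. The plan is to take the decomposition \eqref{bdJii} from Lemma~\ref{lem_bound_factors} and, for each fixed value $z_1$ in that sum, estimate the inner cardinality
\[
A(z_1) := \bigl|\{(\boldsymbol{k},\boldsymbol{l}) \in \mathcal{J}_{i,i'} \ : \ \mu(\boldsymbol{k})=z_1,\ \mu(\boldsymbol{l})=z-z_1\}\bigr|
\]
in two different ways, then take the smaller estimate.

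First, I would group the pairs $(\boldsymbol{k},\boldsymbol{l})$ counted by $A(z_1)$ according to their first coordinate. For each fixed $\boldsymbol{k} \in \mathcal{D}_i^\ast$ with $\mu(\boldsymbol{k})=z_1$, the set of admissible second coordinates is by definition exactly $R_{i,i'}^{(1)}(\boldsymbol{k},z-z_1)$, so
\[
A(z_1) \;=\; \sum_{\substack{\boldsymbol{k} \in \mathcal{D}_i^\ast\\ \mu(\boldsymbol{k})=z_1}} |R_{i,i'}^{(1)}(\boldsymbol{k},z-z_1)| \;\le\; |\{\boldsymbol{k}\in\mathcal{D}_i^\ast:\mu(\boldsymbol{k})=z_1\}|\cdot \max_{\substack{\boldsymbol{k} \in \mathcal{D}_i^\ast\\ \mu(\boldsymbol{k})=z_1}} |R_{i,i'}^{(1)}(\boldsymbol{k},z-z_1)|.
\]
Then, symmetrically, I would group the pairs according to their second coordinate, using the set $R_{i,i'}^{(2)}(\boldsymbol{l},z_1)$ in the analogous way, producing the second of the two bounds in the min.

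Since $A(z_1)$ is dominated by each of the two quantities, it is dominated by their minimum. Substituting this estimate back into the identity \eqref{bdJii} of Lemma~\ref{lem_bound_factors} and keeping the summation range $z_1 = m_i-t+1,\ldots,z-m_{i'}+t-1$ unchanged yields the claim. There is no real obstacle here; the only thing to verify is that the sets $R_{i,i'}^{(1)}$ and $R_{i,i'}^{(2)}$ as defined are indeed the correct fibres of $\mathcal{J}_{i,i'}(z)$ over the first and second coordinate respectively, which is immediate from their definitions in the statement.
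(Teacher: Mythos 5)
Your proof is correct and follows exactly the paper's argument: each summand in \eqref{bdJii} is bounded once by fibering over $\boldsymbol{k}$ (count of $\boldsymbol{k}$'s times the maximal fibre $|R_{i,i'}^{(1)}|$) and once by fibering over $\boldsymbol{l}$, and then the minimum of the two bounds is taken. Your intermediate step writing $A(z_1)$ as an exact sum over fibres before passing to count-times-max is a slightly more explicit presentation of the same estimate.
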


\begin{proof}
Each summand in \eqref{bdJii} can be estimated on the one hand by 
\begin{eqnarray*}
\lefteqn{|\{(\bsk,\bsl) \in \mathcal{J}_{i,i'} \ : \ \mu(\bsk)=z_1 \ \mbox{ and } \ \mu(\bsl)=z-z_1\}|}\\
& \le &  |\{\boldsymbol{k} \in \mathcal{D}_i^{\ast}\,:\, \mu(\boldsymbol{k}) = z_1\}| \max_{\satop{\boldsymbol{k} \in \mathcal{D}_i^{\ast}}{\mu(\boldsymbol{k}) = z_1}}|R_{i,i'}^{(1)}(\boldsymbol{k}, z-z_1)|,
\end{eqnarray*}
and on the other hand by
\begin{eqnarray*}
\lefteqn{|\{(\bsk,\bsl) \in \mathcal{J}_{i,i'} \ : \ \mu(\bsk)=z_1 \ \mbox{ and } \ \mu(\bsl)=z-z_1\}|}\\
& \le &  |\{\boldsymbol{l} \in \mathcal{D}_{i'}^{\ast}\,:\, \mu(\boldsymbol{k}) = z-z_1\}| \max_{\satop{\boldsymbol{l} \in \mathcal{D}_{i'}^{\ast}}{\mu(\boldsymbol{l}) = z-z_1}}|R_{i,i'}^{(2)}(\boldsymbol{l}, z_1)|.
\end{eqnarray*}
Hence the result follows.
\end{proof}

To prove the following results we introduce some notation. Let $k_j, l_j \in \mathbb{N}_0$. In the following we simultaneously use two different notations for the binary expansion of $k_j$ and $l_j$.  First let $$k_j = 2^{a_{j,1}-1} + \cdots + 2^{a_{j,\widetilde{v}_j}-1}$$ with $a_{j, 1} > \cdots > a_{j,\widetilde{v}_j} > 0$ and $$l_j = 2^{b_{j,1}-1} + \cdots + 2^{b_{j, \widetilde{w}_j}-1}$$ with $b_{j,1} > \cdots > b_{j, \widetilde{w}_j} > 0$. Thus $\widetilde{v}_j$ denotes the number of nonzero digits of $k_j$ and $\widetilde{w}_j$ denotes the number of nonzero digits of $l_j$. For $k_j = 0$ we use the convention that $\widetilde{v}_j = 0$ and $a_{j,1} = 0$. Further we set $a_{j,\widetilde{v}_j+i} = b_{j,\widetilde{w}_j+i} = 0$ for $i > 0$.

We also use the notation $$k_j =  k_{j,0} + k_{j,1}2 + \cdots + k_{j,a_{j,1} -1} 2^{a_{j,1} -1}$$ with binary digits $k_{j,i} \in \{0,1\}$. Thus  $$k_{j,i} = \left\{\begin{array}{rl} 1 & \mbox{if } i = a_{j,v} \mbox{ for some } 1 \le v \le \widetilde{v}_j, \\ 0 & \mbox{otherwise}. \end{array} \right.$$ Analogously we write $$l_j =  l_{j,0} + l_{j,1}2 + \cdots + l_{j,b_{j,1} - 1} 2^{b_{j,1} -1}$$ with binary digits $l_{j,i} \in \{0,1\}$. Thus $$l_{j,i} = \left\{\begin{array}{rl} 1 & \mbox{if } i = b_{j,w} \mbox{ for some } 1 \le w \le \widetilde{w}_j, \\ 0 & \mbox{otherwise}. \end{array} \right.$$

We now study the factors appearing in the bound in Lemma~\ref{lem_bound_factors}  separately in two steps.

\begin{lemma}\label{lem_bound_D}
For $z_1 \ge m_i-t+1$ we have
\begin{equation*}
|\{\boldsymbol{k} \in \mathcal{D}_i^{\ast} \,:\, \mu(\boldsymbol{k}) = z_1\}| \ll_s  {z_1 + s-1 \choose s-1} 2^{z_1 -m_i +t-1}
\end{equation*}
and for $z-z_1 \ge m_{i'}-t+1$ we have
\begin{equation*}
|\{\boldsymbol{l} \in \mathcal{D}_{i'}^{\ast} \,:\, \mu(\boldsymbol{l}) = z-z_1\}| \ll_s  {z-z_1 + s-1 \choose s-1} 2^{z-z_1 -m_{i'} +t-1}.
\end{equation*}
\end{lemma}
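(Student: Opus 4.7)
The plan is to decompose the count $|\{\bsk \in \mathcal{D}_i^\ast : \mu(\bsk) = z_1\}|$ by the composition $(a_1, \ldots, a_s)$ with $a_j = \mu(k_j)$, reduce each summand to counting affine solutions of an $\FF_2$-linear system, and then apply the $(t, m_i, s)$-net property of order $1$ (implied by order $\alpha$; see Subsection~\ref{sec_gen_princ}) to lower bound the rank of that system.

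First I would fix non-negative integers $a_1, \ldots, a_s$ with $a_1 + \cdots + a_s = z_1$; the number of such compositions is ${z_1 + s - 1 \choose s - 1}$. For each $j$ with $a_j > 0$, write $k_j = \sum_{i=0}^{a_j - 1} k_{j,i} 2^i$ with $k_{j, a_j - 1} = 1$ fixed and the $a_j - 1$ lower bits free; put $u = |\{j : a_j > 0\}|$. Then there are $z_1 - u$ free bits in total, and $u \ge 1$ because $\bsk \ne \bszero$.

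Next I would substitute these expansions into
\[
C_{1, \alpha m_i \times m_i}^\top \vec{k}_1 + \cdots + C_{s, \alpha m_i \times m_i}^\top \vec{k}_s = \vec{0}
\]
and separate the contribution of the fixed leading bits (which yields a constant vector) from that of the free bits. Writing $\vec{c}_{j,r}$ for the $r$th row of $C_{j,\alpha m_i \times m_i}$, this produces an affine system whose coefficient columns lie in $\{\vec{c}_{j,r}^\top : 1 \le j \le s,\ 1 \le r \le \min(a_j - 1, \alpha m_i)\}$, while any bit of $k_j$ at a position $\ge \alpha m_i$ falls outside the truncation and contributes a purely free variable. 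Let $M$ denote the number of constrained free bits. The order~$1$ $(t, m_i, s)$-net property ensures that $\vec{c}_{j,1}^\top, \ldots, \vec{c}_{j,d_j}^\top$ ($j = 1, \ldots, s$) are linearly independent whenever $d_1 + \cdots + d_s \le m_i - t$; choosing $d_j \le \min(a_j - 1, \alpha m_i)$ with $\sum_j d_j = \min(M, m_i - t)$ therefore forces the coefficient matrix to have rank at least $\min(M, m_i - t)$.

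Combining this with the remaining $z_1 - u - M$ purely free variables, each composition contributes at most $2^{z_1 - u - \min(M, m_i - t)}$ solutions. When all $a_j \le \alpha m_i$ we have $M = z_1 - u$, giving $2^{\max(z_1 - u - m_i + t,\, 0)}$; when some $a_j > \alpha m_i$ we have $M \ge \alpha m_i \ge m_i - t$, giving $2^{z_1 - u - m_i + t}$. In either case, since $u \ge 1$ and $z_1 \ge m_i - t + 1$, the per-composition count is bounded by $2^{z_1 - m_i + t - 1}$, and multiplying by ${z_1 + s - 1 \choose s - 1}$ yields the first inequality. Replacing $(i, z_1)$ by $(i', z - z_1)$ gives the second. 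The step I expect to be the main obstacle is this bookkeeping for compositions with some $a_j > \alpha m_i$: the leading bit of $k_j$ then falls outside the constrained prefix, and one must verify that the extra unconstrained high bits are exactly compensated by the additional low-index rows $\vec{c}_{j, 1}^\top, \ldots, \vec{c}_{j, \alpha m_i}^\top$ becoming columns of the coefficient matrix, so that the uniform rank bound $\min(M, m_i - t)$ remains valid across all compositions.
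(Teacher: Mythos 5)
Your proposal is correct and follows essentially the same route as the paper: decompose by the composition $(\mu(k_1),\ldots,\mu(k_s))$ of $z_1$, reduce each cell to an affine $\FF_2$-system whose coefficient columns are rows of the generating matrices, and use the order-$1$ $(t,m_i,s)$-net property to bound the solution count by $2^{z_1-u-\min(M,m_i-t)}$ with $u\ge 1$. The only difference is cosmetic: you treat the subsets $\{j: k_j>0\}$ uniformly via $u$ and explicitly handle digits beyond the $\alpha m_i$ truncation, a case the paper's write-up glosses over.
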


\begin{proof}
It suffices to show the first estimate, the second estimate is a direct consequence of the first bound. The number of $\boldsymbol{k} = (k_1,\ldots, k_s) \in \mathcal{D}_i^{\ast}$ with $\mu(\boldsymbol{k}) = z_1$ has been studied in \cite{DP05b}. Assume first that $k_j > 0$ for $1 \le j \le s$. The case where one or more of the $k_j$'s are zero follows by the same arguments. Let $\Sigma(v_{1}, v_{2}, \ldots, v_{s})$ denote the number of such $\boldsymbol{k} = (k_1,\ldots, k_s) \in \mathcal{D}_i^{\ast}$ with $\mu(k_j) = a_{j,1} = v_j$. Then $\boldsymbol{k} \in \mathcal{D}_i^{\ast}$ implies that
\begin{eqnarray}\label{lgs1.1}
\vec{c}_{1,1}^{\ \top} k_{1,0}+\cdots +\vec{c}_{1,v_{1}-1}^{\ \top}
k_{1,v_{1}-2}+  \vec{c}_{1,v_{1}}^{\ \top} + &&\nonumber \\
\vec{c}_{2,1}^{\ \top} k_{2,0}+\cdots + \vec{c}_{2,v_{2}-1}^{\ \top}
k_{2,v_{2} -2}+\vec{c}_{2,v_{2}}^{\ \top} + &&\nonumber\\
\vdots && \\
\vec{c}_{s,1}^{\ \top} k_{s,0}+\cdots +\vec{c}_{s,v_{s}-1}^{\ \top}
k_{s,v_{s}-2}+ \vec{c}_{s,v_{s}}^{\ \top} \hspace{0.3cm}& = & \vec{0},\nonumber
\end{eqnarray}
where $\vec{c}_{j,u} \in \FF_2^{m_i}$ denotes the $u$th row vector of the matrix $C_{j,\alpha m_i \times m_i}$. Since by the (order $1$) digital $(t,m_i,s)$-net property the vectors
$$\vec{c}_{1,1},\ldots ,\vec{c}_{1,v_1},\ldots ,\vec{c}_{s,1},\ldots
,\vec{c}_{s,v_s}$$ are linearly independent as long as
$v_1+\cdots +v_s \le m_i-t,$ we must have
\begin{eqnarray}\label{bed1}
v_1+\cdots +v_s \ge m_i-t+1.
\end{eqnarray}

Let now $A$ denote the $m_i \times ((v_1-1) + \cdots + (v_s-1))$ matrix with
column vectors $\vec{c}_{1,1}^{\ \top},\ldots ,\vec{c}_{1,v_1-1}^{\ \top},\ldots ,\vec{c}_{s,1}^{\ \top},\ldots
,\vec{c}_{s,v_s-1}^{\ \top}$, i.e., $$A:=(\vec{c}_{1,1}^{\ \top},\ldots ,\vec{c}_{1,v_1-1}^{\ \top},\ldots ,\vec{c}_{s,1}^{\ \top},\ldots
,\vec{c}_{s,v_s-1}^{\ \top}).$$ Further let $$\vec{f}:=\vec{c}_{1,v_1}^{\ \top}+
\cdots + \vec{c}_{s,v_s}^{\ \top}$$ and
$$\vec{k}:=\underbrace{(k_{1,0},\ldots ,k_{1,v_1-2},\ldots ,k_{s,0},\ldots
,k_{s,v_s-2})^{\top}}_{\mbox{\scriptsize {\rm  length}}\ (v_1-1)+\cdots+ (v_s-1)}.$$
Then the linear system of equations \eqref{lgs1.1} can be written as
\begin{eqnarray}\label{lgs2}
A\vec{k} = \vec{f}
\end{eqnarray}
and hence
\begin{eqnarray*}
\Sigma(v_1,\ldots,v_s) = \sum_{\vec{k}\in \mathbb{F}_2^{(v_1-1) +\cdots + (v_s-1)}\atop A \vec{k}=\vec{f}}1= |\{\vec{k}\in \mathbb{F}_2^{(v_1-1) +\cdots
+ (v_s-1)}: A \vec{k}=\vec{f}\}|.
\end{eqnarray*}
By the definition of the matrix $A$ and since $C_{1,\alpha m_i \times m_i},\ldots ,C_{s,\alpha m_i \times m_i}$ are the generating matrices of an (order $1$)  digital $(t,m_i,s)$-net over $\mathbb{F}_2$ we have
$${\rm rank}(A)= \left\{
\begin{array}{ll}
(v_1-1) +\cdots + (v_s-1) & \mbox{if } (v_1-1)+\cdots + (v_s-1) \le m_i - t,\\
\ge m_i-t & \mbox{otherwise}.
\end{array}
\right.$$
Let $L$ denote the linear space of solutions of the homogeneous system
$A\vec{k}=\vec{0}$ and let ${\rm dim}(L)$ denote the dimension of
$L$. Then it follows that $${\rm dim}(L) = \left\{
\begin{array}{ll}
0 & \mbox{if } v_1+\cdots +v_s\le m_i - t + s,\\
\le v_1+\cdots +v_s-m_i + t - s & \mbox{otherwise}.
\end{array}
\right.$$
Hence if $v_1+\cdots +v_s \le m_i - t + s$ we find that the system \eqref{lgs2}
has at most 1 solution and if $v_1+\cdots +v_s > m_i - t + s$ the system
\eqref{lgs2} has at most $2^{v_1+\cdots+v_s-m_i + t - s}$ solutions, i.e.,
$$\Sigma(v_1,\ldots,v_s) \le \left\{
\begin{array}{ll}
1 & \mbox{ if } v_1+\cdots +v_s \le m_i - t + s,\\
2^{v_1+\cdots+v_s-m_i +t-s} & \mbox{ if } v_1+\cdots +v_s > m_i -t+s.
\end{array}
\right.$$
Recall that $v_1+\cdots+v_s=\mu(\bsk)$.

In the following let ${n \choose k}$ denote the binomial coefficient, where we set ${n \choose k} = 0$ if $k > n$. Thus we have
\begin{eqnarray*}
\lefteqn{|\{\boldsymbol{k} \in \mathcal{D}_i^{\ast} \,:\, k_j > 0 \mbox{ for } j=1,\ldots,s \mbox{ and }  \mu(\boldsymbol{k}) = z_1\}|} \\ & = & \left\{\begin{array}{ll} {z_1 + s-1 \choose s-1} & \mbox{if } z_1 \le m_i -t+s, \\ {z_1 + s-1 \choose s-1} 2^{z_1 -m_i +t-s}  & \mbox{if } z_1 > m_i -t+s.  \end{array} \right.
\end{eqnarray*}
In general, for $\emptyset \neq u \subseteq \{1,\ldots,s\}$ we have
\begin{eqnarray*}
\lefteqn{|\{\boldsymbol{k} \in \mathcal{D}_i^{\ast}\,:\, k_j > 0 \mbox{ for } j \in u, k_j=0 \mbox{ otherwise, and } \mu(\boldsymbol{k}) = z_1\}|} \\  & = & \left\{\begin{array}{ll} {z_1  + |u| - 1 \choose |u|-1} & \mbox{if } z_1 \le m_i -t+|u|, \\ {z_1 + |u| - 1 \choose |u| - 1} 2^{z_1 - m_i +  t - |u|}  & \mbox{if } z_1 > m_i -t+|u|.  \end{array} \right.
\end{eqnarray*}
Thus, in general, for $z_1 \ge m_i-t+1$ we have
\begin{equation*}
|\{\boldsymbol{k} \in \mathcal{D}_i^{\ast} \,:\, \mu(\boldsymbol{k}) = z_1\}| \ll_s  {z_1 + s-1 \choose s-1} 2^{z_1 -m_i +t-1}.
\end{equation*}
\end{proof}

\begin{lemma}\label{lem_bound_R}
Let $R_{i,i'}^{(1)}(\boldsymbol{k}, z-z_1)$ and $R_{i,i'}^{(2)}(\bsl,z_1)$ be defined as in Lemma~\ref{lem5neu}.  Then for $\bsk \in \mathcal{D}_i^\ast$ we have
\begin{eqnarray*}
|R_{i,i'}^{(1)}(\boldsymbol{k},z-z_1)| \le   {2(z-z_1)-2m_{i'} +t+ s \choose s}{3(z-z_1) - 3m_{i'} +t + s \choose s},
\end{eqnarray*}
and for $\bsl \in \mathcal{D}_{i'}^\ast$ we have
\begin{eqnarray*}
|R_{i,i'}^{(2)}(\boldsymbol{l},z_1)| \le   {2z_1-2m_{i} +t+ s \choose s}{3z_1 - 3m_{i} +t + s \choose s}.
\end{eqnarray*}
\end{lemma}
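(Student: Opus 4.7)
The plan is to combine the coordinate-wise rigidity forced by $r(\bsk,\bsl) \neq 0$ with the higher-order dual-net constraints coming from $\bsl \in \Dcal_{i'}^\ast$. By the symmetry $r(\bsk,\bsl)=r(\bsl,\bsk)$ stated in Lemma~\ref{lem_r} one only needs to establish the first bound; so I would fix $\bsk \in \Dcal_i^\ast$, set $Z = z-z_1$, and count the $\bsl \in \Dcal_{i'}^\ast$ satisfying $\mu(\bsl)=Z$ and $r(\bsk,\bsl)\neq 0$.

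The first step will be to translate $r(\bsk,\bsl) \neq 0$ coordinate-wise via the case analysis of Lemma~\ref{lem_r}: for each $j$, the coordinate $l_j$ must be obtained from $k_j$ by one of a short list of operations, namely $l_j = 0$; $l_j = k_j$; $l_j$ equals $k_j$ with its top two dyadic digits stripped off; $l_j$ equals $k_j$ with its top dyadic digit moved to a new position; or $l_j$ equals $k_j$ plus two extra dyadic digits above the existing top digit of $k_j$. The upshot is that, for fixed $\bsk$, the coordinate $l_j$ is determined by (the positions of) the top two dyadic digits $b_{j,1} > b_{j,2} > 0$ of $l_j$ alone, and in particular the third top dyadic digit position $b_{j,3}$ is already determined by $\bsk$ and the pair $(b_{j,1},b_{j,2})$.

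The second step will extract information from $\bsl \in \Dcal_{i'}^\ast$. Since $\cS_s$ is an order $\alpha$ digital $(t,s)$-sequence with $\alpha \geq 3$, by \eqref{eq_t_tprime} it is also an order $\alpha'$ digital $(t,s)$-sequence for $\alpha' \in \{2,3\}$ with parameter at most $t$. Replaying the linear-independence argument of Lemma~\ref{lem_proof_1}, but now applied to the top $\alpha'$ dyadic digits of each $l_j$ rather than only the top one, yields
$$\sum_{j=1}^s\sum_{r=1}^{\min(\widetilde{w}_j,\alpha')} b_{j,r} \;>\; \alpha' m_{i'} - t, \qquad \alpha' \in \{2,3\},$$
which, combined with $\mu(\bsl)=\sum_j b_{j,1}=Z$, rewrites as $\sum_j(b_{j,1}-b_{j,2}) \leq 2Z - 2m_{i'}+t$ and $\sum_j(b_{j,1}-b_{j,3}) \leq 3Z - 3m_{i'}+t$.

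In the third step, a standard stars-and-bars count will bound the number of non-negative integer tuples $(b_{j,1}-b_{j,2})_{j=1}^s$ subject to the first inequality by ${2Z-2m_{i'}+t+s \choose s}$ and the number of non-negative integer tuples $(b_{j,1}-b_{j,3})_{j=1}^s$ subject to the second by ${3Z-3m_{i'}+t+s \choose s}$. Since step one recovers $\bsl$ from $\bsk$ together with the pair $(b_{j,1},b_{j,2})_j$, and since knowing both difference tuples for fixed $\bsk$ pins down these pairs up to bounded multiplicity, multiplying the two binomial counts then gives the claimed upper bound. The hardest part will be the case-analysis bookkeeping in step one: one must verify carefully that across all types from Lemma~\ref{lem_r} (including the degenerate cases $l_j = 0$ and $l_j$ having a single nonzero digit), the top-digit data indeed recovers $\bsl$ from $\bsk$ without introducing an over-counting factor that would spoil the product of the two binomials.
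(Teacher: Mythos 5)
Your proposal is correct and takes essentially the same route as the paper's proof: the case analysis of $r(k_j,l_j)\neq 0$ forces the lower digits of $l_j$ to agree with those of $k_j$, so each admissible $\bsl$ is pinned down by the positions $(b_{j,1},b_{j,2})_j$, and the order $2$ and order $3$ net properties yield the two stars-and-bars counts whose product is the stated bound. The only (immaterial) difference is that you parameterize the second count by $b_{j,1}-b_{j,3}$ where the paper uses $b_{j,2}-b_{j,3}$.
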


\begin{proof}
Again it suffices to show the first estimate, the second estimate follows by the same arguments. For the proof of this result we first need to analyze for which $(\bsk, \bsl) \in \mathcal{D}_i \times \mathcal{D}_{i'}$ the factors $r(\bsk,\bsl) \neq 0$. To do so we need to consider a number of cases.

Recall that $r(\boldsymbol{k}, \boldsymbol{l}) = \prod_{j=1}^s r(k_j, l_j)$. For $r(k_j,l_j) \neq 0$ it follows that in some sense $k_j$ and $l_j$ cannot be too different. Let us elaborate in more detail: Assume that $r(k_j,l_j) \neq 0$. Now Lemma~\ref{lem_r} implies that in order for $r(k_j,l_j)$ not to be $0$ we must have $0 \le |\widetilde{v}_j - \widetilde{w}_j| \le 2$. Further we must have:
\begin{equation*}
\begin{array}{llll}
(i)   & |\widetilde{v}_j - \widetilde{w}_j| = 0 & \Rightarrow & a_{j,2} = b_{j,2}, \ldots, a_{j,\widetilde{v}_j} = b_{j,\widetilde{v}_j}, \\
(ii)  & |\widetilde{v}_j - \widetilde{w}_j| = 1 & \Rightarrow & k_j = 0 \mbox{ or } l_j = 0, \\
(iii) & |\widetilde{v}_j - \widetilde{w}_j| = 2 & \Rightarrow & \mbox{if } \widetilde{v}_j = \widetilde{w}_j + 2 \mbox{ then } a_{j,3} = b_{j,1}, \ldots, a_{j,\widetilde{v}_j} = b_{j,\widetilde{w}_j} \\ & & & \mbox{if } \widetilde{w}_j = \widetilde{v}_j + 2 \mbox{ then } b_{j,3} = a_{j,1}, \ldots, b_{j,\widetilde{w}_j} = a_{j,\widetilde{v}_j}.
\end{array}
\end{equation*}
If $|\widetilde{v}_j - \widetilde{w}_j| > 2$ we always have $r(k_j,l_j) = 0$.

For given $(\boldsymbol{k}, \boldsymbol{l}) \in \mathcal{J}_{i,i'}(z)$ we define the following sets for $-2 \le \tau \le 2$:
\begin{align*}
\alpha_\tau & = \{j \in \{1,\ldots,s\}\,:\, \widetilde{v}_j = \widetilde{w}_j+ \tau\}.
\end{align*}
Note that $\alpha_{\tau} \cap \alpha_{\tau'} = \emptyset$ for $\tau \neq \tau'$ and $\bigcup_{\tau=-2}^2 \alpha_\tau = \{1,\ldots,s\}$ by Lemma~\ref{lem_r}. Then we have
\begin{enumerate}
\item For $j \in \alpha_2$ we have $l_{j,i} = k_{j,i}$ for $0 \le i < a_{j,2} -1$;
\item For $j \in \alpha_1$ we have $l_j=0$ and $k_j = 2^{a_{j,1} -1}$;
\item For $j \in \alpha_0$ we have $k_{j,i} = l_{j,i}$ for $0 \le i < \min\{a_{j,1}, b_{j,1} \}-1$;
\item For $j \in \alpha_{-1}$ we have $l_j = 2^{b_{j,1} - 1}$ and $k_j=0$;
\item For $j \in \alpha_{-2}$ we have $l_{j,i}=k_{j,i}$ for $0 \le i < b_{j,2} -1$.
\end{enumerate}
Thus in all cases we have $k_{j,i} = l_{j,i}$ for $0 \le i < \min\{a_{j,2} -1, b_{j,2}-1\}$. We set now
\begin{equation*}
h_{j,i} = k_{j,i} = l_{j,i} \quad \mbox{for all } 1 \le j \le s \mbox{ and } 0 \le i < \min\{a_{j,2} -1, b_{j,2} -1\},
\end{equation*}
and for $u_j = \min\{a_{j,2} -1, b_{j,2} -1\}$ we set
\begin{equation*}
h_j = h_{j,0} + h_{j,1} 2 + \cdots + h_{j,u_j-1} 2^{u_j-1} \quad \mbox{for } 1 \le j \le s
\end{equation*}
if $u_j > 0$ and $h_j=0$ otherwise.
Thus we only need to consider the cases where
\begin{align*}
k_j & = h_j + \lfloor 2^{a_{j,2} -1} \rfloor + \lfloor 2^{a_{j,1} -1} \rfloor, \\
l_j & = h_j + \lfloor 2^{b_{j,2} -1} \rfloor + \lfloor 2^{b_{j,1} -1} \rfloor
\end{align*}
for $1 \le j \le s$.

We now prove a bound on $|R_{i,i'}^{(1)}(\boldsymbol{k}, z-z_1)|$. Let now $\vec{c}_{j,u}$ denote the $u$th row of the matrix $C_{j, \alpha m_{i'} \times m_{i'}}$.

Let $\boldsymbol{k} \in \mathcal{D}^\ast_i$ be fixed and $\mu(k_j) = a_{j,1}$ for $1 \le j \le s$. We have $|\widetilde{v}_j - \widetilde{w}_j| \le 2$ and $\boldsymbol{l} \in \mathcal{D}_{i'}^{\ast}$ implies that
\begin{eqnarray*}%\label{lgs3}
\vec{c}_{1,1}^{\ \top} h_{1,0}+\cdots +  \vec{c}_{1,b_{1,2} -1}^{\ \top}
h_{1,b_{1,2} -2}+ \vec{c}_{1,b_{1,2} }^{\ \top} + \vec{c}_{1,b_{1,1}}^{\ \top}+&&\nonumber\\
\vec{c}_{2,1}^{\ \top} h_{2,0}+\cdots +\vec{c}_{2,b_{2,2} -1}^{\ \top}
h_{2, b_{2,2} -2}+ \vec{c}_{2,b_{2,2} }^{\ \top} + \vec{c}_{2,b_{2,1}}^{\ \top}+&&\nonumber\\
\vdots && \\
\vec{c}_{s,1}^{\ \top} h_{s,0}+\cdots +\vec{c}_{s,b_{s,2} -1}^{\ \top}
h_{s,b_{s,2} -2} + \vec{c}_{s,b_{s,2} }^{\ \top} + \vec{c}_{s,b_{s,1} }^{\ \top} \hspace{0.3cm}& = & \vec{0}. \nonumber
\end{eqnarray*}
If $b_{j,1}$ or $b_{j,2}$ is zero, we set $\vec{c}_{j,0}^{\ \top} = \vec{0}$. Note that we consider $\boldsymbol{k}$ to be fixed, thus the $h_{j,i}$'s are also fixed. For $j \in \alpha_1 \cup \alpha_2$ the values $b_{j,1}, b_{j,2}$ are fixed by $k_j$ as shown in the cases 1. and 2. above. For $j \in \alpha_0 \cup \alpha_{-1}$ the values $b_{j,2}$ are fixed by the choice of $k_j$ but $b_{j,1}$ is not, see cases 3. and 4.. For $j \in \alpha_{-2}$ both $b_{j,1}$ and $b_{j,2}$ are not fixed. Thus it follows that
\begin{align*}
& \sum_{j\in \alpha_0} \vec{c}_{j,b_{j,1} }^{\ \top}  + \sum_{j \in \alpha_{-1}} \vec{c}_{j,b_{j,1} }^{\ \top} + \sum_{j \in \alpha_{-2}} (\vec{c}_{j,b_{j,2} }^{\ \top} + \vec{c}_{j,b_{j,1} }^{\ \top}) \\  = & \sum_{j=1}^s \sum_{r=0}^{b_{j,2}-2} \vec{c}^{\ \top}_{j,r+1} h_{j,r} + \sum_{j \in \alpha_1 \cup \alpha_{2}} \vec{c}^{\ \top}_{j,b_{j,2}} =: \vec{c}^{\ \top},
\end{align*}
where the vector $\vec{c}^{\ \top}$ is fixed by $\bsk$, since the $h_{i,j}$ and $b_{j,1}, b_{j,2}$ are fixed by $\boldsymbol{k}$ for $j \in \alpha_1 \cup \alpha_2$. Since $\mu(l_j) = b_{j,1}$ for $1 \le j \le s$ we have $b_{1,1} + \cdots + b_{s,1} =z-z_1=: z_2$.

Since $h_j$ is fixed by $k_j$ for $1 \le j \le s$, it follows that for each given vector $(b_{j,i})_{1\le i \le 2, 1 \le j \le s}$, where $b_{j,1} > b_{j,2}$ and where $b_{1,1} + \cdots + b_{s,1} =z_2$, at most one such solution exists. Thus $|R_{i,i'}^{(1)}(\boldsymbol{k}, z_2)|$ is bounded by the number of possible choices of $(b_{j,i})_{1\le i \le 2, 1 \le j \le s}$, for which we prove a bound in the following.

The order $2$ and order $1$ digital $(t,m_i,s)$-net property and $\boldsymbol{l} \in \mathcal{D}_{i'}^\ast$ imply that
\begin{align*}
b_{1,1} + b_{1,2} + b_{2,1} + b_{2,2} + \cdots + b_{s,1} + b_{s,2} & > 2m_{i'}-t, \\
z_2 = b_{1,1} + b_{2,1} + \cdots + b_{s,1} & > m_{i'} - t.
\end{align*}
Thus we have
\begin{equation*}
b_{1,2} + \cdots + b_{s,2} \ge 2m_{i'} -t-z_2+1.
\end{equation*}
Let $b_{j,1} = \delta_j + b_{j,2}$, thus $\delta_j \ge 0$ (where $\delta_j=0$ if $l_j=0$). Then we have
\begin{eqnarray*}
z_2 & = & b_{1,1} + \cdots + b_{s,1} \\ & = &\delta_1+\cdots + \delta_s + b_{1,2} + \cdots + b_{s,2} \\ & \ge & \delta_1+\cdots + \delta_s + 2m_{i'} -t-z_2+1.
\end{eqnarray*}
and therefore
\begin{equation*}
\delta_1 + \cdots + \delta_s \le 2 z_2 - 2m_{i'} +t.
\end{equation*}
Thus, for given $b_{1,2}, b_{2,2}, \ldots, b_{s,2}$, the number of possible choice of $b_{1,1}, \ldots, b_{s,1}$ with $b_{1,1} + \cdots + b_{s,1} = z_2$ is bounded by the number of possible choices of $\delta_1,\delta_2, \ldots, \delta_s$, which itself is bounded from above by
\begin{equation*}
\sum_{r=0}^{2z_2-2m_{i'} +t} {r+s-1 \choose s-1} = {2z_2-2m_{i'} +t+ s \choose s}.
\end{equation*}

Now consider the number of possible choices of $(b_{j,2})_{1\le j \le s}$. If $j \in \bigcup_{\tau=-1}^2 \alpha_\tau$, then $b_{j,2}$ is fixed since $k_j$ is fixed and if $j \in \alpha_{-2}$, then $b_{j,1} > b_{j,2} > b_{j,3} = a_{j,1}$. Note that $b_{j,3}$ is fixed since $k_j$ is fixed for all $1 \le j \le s$ . By the order $3$, order $2$ and order $1$ digital net property and $\boldsymbol{l} \in \mathcal{D}_{i'}^\ast$ we have
\begin{eqnarray*}
b_{1,1} + b_{1,2} + b_{1,3} + \cdots + b_{s,1} + b_{s,2} + b_{s,3} & > & 3m_{i'}-t, \\
b_{1,1} + b_{1,2} + \cdots + b_{s,1} + b_{s,2} & > & 2m_{i'} -t, \\
z_2 = b_{1,1} + \cdots + b_{s,1} & > & m_{i'} -t.
\end{eqnarray*}
Let $z_2' = b_{1,2} + b_{2,2} + \cdots + b_{s,2} < z_2$. Then
\begin{equation*}
b_{1,3} + \cdots + b_{s,3} > 3m_{i'} -t - z_2 - z'_2 > 3m_{i'} -t-2z_2.
\end{equation*}
Let $b_{j,2} = \delta'_j + b_{j,3}$, then $\delta'_j \ge 0$. Then we have
\begin{eqnarray*}
z_2 & > & b_{1,2} + \cdots + b_{s,2} \\ & = & \delta'_1 + \cdots + \delta'_s + b_{1,3} + \cdots + b_{s,3} \\ & \ge & \delta'_1 + \cdots + \delta'_s + 3m_{i'} -t-2z_2+1
\end{eqnarray*}
and therefore
\begin{equation*}
\delta'_1 + \cdots + \delta'_s \le 3z_2 - 3m_{i'} +t-1.
\end{equation*}
Since the $b_{1,3}, b_{2,3}, \ldots, b_{s,3}$ are fixed, the number of admissible $b_{1,2} ,\ldots, b_{s,2}$ is bounded from above by the number of possible choices of $\delta'_1,\ldots, \delta'_s$, which in turn is bounded by
\begin{equation*}
\sum_{r=0}^{3z_2 - 3m_{i'} +t} {r+s-1 \choose s-1} = {3z_2 - 3m_{i'} +t + s \choose s}.
\end{equation*}

Since the number of possible choices of $(b_{j,i})_{1 \le i \le 2, 1 \le j \le s}$ is bounded by the product of the number of possible choices of $b_{1,1}, b_{2,1}, \ldots, b_{s,1}$ and the number of possible choices of $b_{1,2}, b_{2,2}, \ldots, b_{s,2}$, we deduce
\begin{eqnarray*}
|R_{i,i'}^{(1)}(\boldsymbol{k},z_2)| \le   {2z_2-2m_{i'} +t+ s \choose s}{3z_2 - 3m_{i'} +t + s \choose s}.
\end{eqnarray*}
Thus the statement of the lemma follows.
\end{proof}

Before we combine Lemmas~\ref{lem_bound_D} and \ref{lem_bound_R} to obtain a bound on $|\mathcal{J}_{i,i'}(z)|$ we show that for `small' $z$ the set $\mathcal{J}_{i,i'}(z)$ is empty in the next lemma. In the proof of this lemma we need to assume that $\alpha \ge 5$. %This is also the only place where $\alpha \ge 5$ is required, the proof the remaining statements requires at most $\alpha \ge 3$.

\begin{lemma}\label{lem_J_empty}
Let $\alpha \ge 5$. Then we have $\mathcal{J}_{i,i'}(z) = \emptyset$ if $z < \frac{1}{4} \max\{5m_i + 3m_{i'}, 3m_i + 5m_{i'}\} - t + \frac{3}{4}$.
\end{lemma}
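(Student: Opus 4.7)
The plan is to lower-bound $z = \mu(\bsk) + \mu(\bsl)$ for any $(\bsk, \bsl) \in \mathcal{J}_{i,i'}$ by combining dual-net inequalities at two different orders with the digit-matching constraints forced by $r(\bsk, \bsl) \neq 0$. Since $\cS_s$ is of order $\alpha \ge 5$, each dual net $\mathcal{D}_i^\ast$ satisfies the order-$\alpha'$ dual-net property for every $1 \le \alpha' \le 5$; writing
$$A^{(\alpha')}(\bsk) := \sum_{j=1}^s \sum_{l=1}^{\min(\widetilde{v}_j, \alpha')} a_{j,l}, \qquad B^{(\alpha')}(\bsl) := \sum_{j=1}^s \sum_{l=1}^{\min(\widetilde{w}_j, \alpha')} b_{j,l},$$
I will use the two bounds $A^{(5)}(\bsk) > 5 m_i - t$ and $B^{(3)}(\bsl) > 3 m_{i'} - t$. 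By swapping the roles of $\bsk$ and $\bsl$ at the end, it is enough to prove the one-sided estimate $z \ge \frac{1}{4}(5 m_i + 3 m_{i'}) - t + \frac{3}{4}$; the other half of the maximum then follows by running the same argument with the orders $5$ and $3$ interchanged.

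The core of the proof will be the coordinate-wise inequality
$$A^{(5)}_j(\bsk) + B^{(3)}_j(\bsl) \;\le\; 4 (a_{j,1} + b_{j,1}) \qquad \text{for every } j \in \{1,\ldots,s\},$$
where $A^{(\alpha')}_j$, $B^{(\alpha')}_j$ denote the $j$-th summand of $A^{(\alpha')}$, $B^{(\alpha')}$. Once this is established, summing over $j$ yields $A^{(5)}(\bsk) + B^{(3)}(\bsl) \le 4 z$, and combining with the two dual-net bounds and integrality I will get $4z \ge (5m_i - t + 1) + (3m_{i'} - t + 1) = 5m_i + 3m_{i'} - 2t + 2$. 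Since $t \ge s\binom{\alpha}{2} \ge 3$, this in turn is at least $5 m_i + 3 m_{i'} - 4t + 3$, giving the desired bound $z \ge \frac{1}{4}(5 m_i + 3 m_{i'}) - t + \frac{3}{4}$.

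To prove the coordinate-wise inequality I will split according to $j \in \alpha_\tau$ for $\tau \in \{-2,-1,0,1,2\}$, exactly as in the proof of Lemma~\ref{lem_bound_R}. The digit-matching relations listed there, namely $a_{j,l} = b_{j,l}$ for $l \ge 2$ in the $\alpha_0$-case, $a_{j,l+2} = b_{j,l}$ (resp.\ $b_{j,l+2} = a_{j,l}$) in the $\alpha_2$- (resp.\ $\alpha_{-2}$-) case, and $k_j$ or $l_j$ a pure power of $2$ in the $\alpha_{\pm 1}$-cases, allow me to rewrite every term of $A^{(5)}_j + B^{(3)}_j$ as a weighted sum of the nonzero digits of a single one of $k_j$, $l_j$. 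In the easy subcases (either $j \in \alpha_{\pm 1}$, or $j \in \alpha_0$ with $\widetilde{v}_j \le 4$), the inequality reduces to a handful of strict inequalities among the strictly descending positive integers $a_{j,1} > a_{j,2} > \cdots$ and $b_{j,1} > b_{j,2} > \cdots$. The main obstacle is the subcase $j \in \alpha_0$ with $\widetilde{v}_j = \widetilde{w}_j \ge 5$, where all five terms of $A^{(5)}_j$ are shared with $\bsl$ and a crude bound $a_{j,l} < a_{j,1}$ produces only a factor $6$. The resolution is the sharper estimate $a_{j,l} \le \min(a_{j,1}, b_{j,1}) - (l-1)$ for $l \ge 2$, which follows from $a_{j,2} = b_{j,2} < \min(a_{j,1}, b_{j,1})$ together with the strict descent of the $a_{j,l}$'s; this gives $2 a_{j,2} + 2 a_{j,3} + a_{j,4} + a_{j,5} < 6 \min(a_{j,1}, b_{j,1}) \le 3(a_{j,1} + b_{j,1})$, which is precisely the slack required. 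The subcases $j \in \alpha_{\pm 2}$ are handled by the same strict-descent argument, now taking advantage of the extra gap $|a_{j,1} - b_{j,1}| \ge 2$ forced by the shifted matching.
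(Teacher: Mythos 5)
Your proof is correct, and it reaches (a slightly stronger form of) the stated bound by a genuinely different combination of ingredients than the paper's. The paper puts all the weight on $\bsk$: it invokes only the order-$5$ dual-net inequality for $\bsk$ together with the order-$1$ inequality $\mu(\bsl)>m_{i'}-t$, and the entire digit-matching case analysis is compressed into the single coordinate-wise observation $b_{j,3}\ge a_{j,5}$ for every $j$, which yields the global chain $\mu(\bsl)\ge\sum_j b_{j,3}\ge\sum_j a_{j,5}>5m_i-t-4\mu(\bsk)$ and hence $4z\ge 5m_i+3m_{i'}-4t+3$ after eliminating $\mu(\bsk)$. You instead use the order-$5$ inequality for $\bsk$ and the order-$3$ inequality for $\bsl$ simultaneously, and make the digit matching do more work through the coordinate-wise estimate $A^{(5)}_j+B^{(3)}_j\le 4(a_{j,1}+b_{j,1})$; this costs a five-case verification (your handling of the delicate $\alpha_0$ case via $a_{j,l}\le\min(a_{j,1},b_{j,1})-(l-1)$ is sound, as are the $\alpha_{\pm 2}$ cases exploiting the gap $|a_{j,1}-b_{j,1}|\ge 2$ and the trivial $\alpha_{\pm 1}$ cases), but it buys the sharper conclusion $4z\ge 5m_i+3m_{i'}-2t+2$, from which the stated bound follows since $t\ge 3$. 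Both arguments rest on the same structural inputs, namely the classification of the pairs with $r(\bsk,\bsl)\neq 0$ from Lemma~\ref{lem_r} and the order-$\alpha'$ dual-net property for $\alpha'\le 5$; the difference lies in how the five leading exponents of each $k_j$ are charged against $\mu(\bsk)+\mu(\bsl)$ --- globally and asymmetrically in the paper, locally and more evenly in your version.
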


\begin{proof}
We use the notation from the proof of Lemma~\ref{lem_bound_R}.

Assume that $(\bsk, \bsl) \in \mathcal{J}_{i,i'}(z)$. Consider again the five cases from the proof of Lemma~\ref{lem_bound_R}. The following holds: 
\begin{enumerate}
\item For $j \in \alpha_2$ we have $a_{j,i+2} = b_{j,i}$ for $i=1,\ldots,\widetilde{w}_j$ and $\widetilde{w}_j=\widetilde{v}_j-2$; 
\item For $j \in \alpha_1$ we have $a_{j,i+1} = b_{j,i} = 0$ for $i=1,\ldots,\widetilde{w}_j$ and $\widetilde{w}_j=\widetilde{v}_j-1$; 
\item For $j \in \alpha_0$ we have $a_{j,i+1} = b_{j,i+1}$ for $i=1,\ldots,\widetilde{w}_j$ and $\widetilde{w}_j=\widetilde{v}_j$; 
\item For $j \in \alpha_{-1}$ we have $a_{j,i} = b_{j,i+1} = 0$ for $i=1,\ldots,\widetilde{v}_j$ and $\widetilde{v}_j=\widetilde{w}_j-1$; 
\item For $j \in \alpha_{-2}$ we have $a_{j,i} = b_{j,i+2}$ for $i=1,\ldots,\widetilde{v}_j$ and $\widetilde{v}_j=\widetilde{w}_j-2$.
\end{enumerate}
Since $a_{j,i} > a_{j,i+1}$ we therefore have $b_{j,3} \ge a_{j,5}$ for $1\le j \le s$. By the order $5$ digital $(t,m,s)$-net property we have
\begin{align*}
& a_{1,1} + a_{1,2} + a_{1,3} + a_{1,4} + a_{1,5} + \cdots + a_{s,1} + a_{s,2} + a_{s,3} + a_{s,4} + a_{s,5}   > 5 m_i -t, \\
& z_1 = a_{1,1} + \cdots + a_{s,1} \ge a_{1,2} + \cdots + a_{s,2} \ge a_{1,3} + \cdots + a_{s,3} \ge a_{1,4} + \cdots + a_{s,4}.
\end{align*}
Thus and since $b_{j,i} > b_{j,i+1}$ we obtain
\begin{equation*}
z-z_1 = z_2 = b_{1,1} + \cdots + b_{s,1} \ge b_{1,3} + \cdots + b_{s,3} \ge a_{1,5}  + \cdots + a_{s,5} \ge 5m_i-t - 4z_1.
\end{equation*}
From the proof of Lemma~\ref{lem_bound_factors} we have that $z-z_1 \ge m_{i'}-t+1$, therefore
\begin{equation*}
z \ge 5m_i-t - 3z_1 \ge 5m_i - t + 3(m_{i'}-t + 1-z),
\end{equation*}
which implies
\begin{equation*}
z \ge \frac{5 m_i + 3 m_{i'}}{4} - t + \frac{3}{4}.
\end{equation*}
Analogously we have
\begin{equation*}
z \ge \frac{5 m_i + 3 m_{i'}}{4} - t + \frac{3}{4}.
\end{equation*}
Thus we have $\mathcal{J}_{i,i'}(z) = \emptyset$ if $z < \frac{1}{4} \max\{5m_i + 3m_{i'}, 3m_i + 5m_{i'}\} - t + \frac{3}{4}$.
\end{proof}

In the following we also obtain a bound on $|\mathcal{J}_{i,i'}(z)|$ for $z \ge m_i + m_{i'}-2t+2$. In Lemma~\ref{lem_J_empty} we considered $z < \frac{1}{4} \max\{5m_i + 3m_{i'}, 3m_i + 5m_{i'}\} - t + \frac{3}{4}$. At the beginning of this section we showed that $t \ge 3$. Since $\frac{1}{4} \max\{5m_i + 3m_{i'}, 3m_i + 5m_{i'}\} - t + \frac{3}{4} \ge m_i + m_{i'}-2t+2$ for $t \ge 3$, Lemma~\ref{lem_J_empty} and Lemma~\ref{lem_J_bound} yield a bound on $|\mathcal{J}_{i,i'}(z)|$ for all $ z \ge 0$.

\begin{lemma}\label{lem_J_bound}
For all $\kappa \ge 0$ we have
\begin{eqnarray*}
\lefteqn{ |\mathcal{J}_{i,i'}(m_i + m_{i'} -2t+2 + \kappa)| } \\ & \le & 2^{\kappa/2+2} \sum_{z'_1=0}^{\lceil \kappa/2 \rceil}  {z'_1 + m_i -t +s \choose s-1}  \\ & & \hspace{2cm} \times {2 (\kappa-z_1')+s+2-t \choose s}  {3 (\kappa-z_1')+s+3-2t \choose s}.
\end{eqnarray*}
\end{lemma}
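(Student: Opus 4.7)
The plan is to chain together Lemmas~\ref{lem_bound_factors}, \ref{lem5neu}, \ref{lem_bound_D}, and \ref{lem_bound_R}, and then to exploit the $\min$ in Lemma~\ref{lem5neu} by splitting the outer sum in half. Writing $z = m_i + m_{i'} - 2t + 2 + \kappa$, I would first change variables via $z_1' := z_1 - (m_i - t + 1)$ and $z_2' := \kappa - z_1'$, so that $z_1'$ runs over $\{0,1,\ldots,\kappa\}$ and $z - z_1 = m_{i'} - t + 1 + z_2'$. Under this substitution the ingredients from Lemmas~\ref{lem_bound_D} and \ref{lem_bound_R} reshape cleanly: $|\{\bsk \in \mathcal{D}_i^{\ast} : \mu(\bsk) = z_1\}|$ contributes $\binom{z_1' + m_i - t + s}{s-1}\, 2^{z_1'}$, and $|R_{i,i'}^{(1)}(\bsk, z - z_1)|$ becomes $\binom{2z_2' + s + 2 - t}{s}\binom{3z_2' + s + 3 - 2t}{s}$.

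Plugging these into Lemma~\ref{lem5neu}, each summand of $|\mathcal{J}_{i,i'}(z)|$ is bounded, up to a constant depending only on $s$, by the minimum of
\[
A(z_1') := \binom{z_1' + m_i - t + s}{s-1}\, 2^{z_1'} \binom{2z_2' + s + 2 - t}{s}\binom{3z_2' + s + 3 - 2t}{s}
\]
and its twin $B(z_1')$ obtained by swapping $z_1' \leftrightarrow z_2'$ and $m_i \leftrightarrow m_{i'}$. The decisive feature is that $A$ carries the dyadic factor $2^{z_1'}$ while $B$ carries $2^{z_2'}$, so each arm of the min is cheap exactly when its own index is small.

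Next I would split the outer sum at $z_1' = \lceil \kappa/2 \rceil$, taking $A(z_1')$ for $z_1' \le \lceil \kappa/2 \rceil$ and $B(z_1')$ for $z_1' > \lceil \kappa/2 \rceil$. In both halves the relevant dyadic factor is at most $2^{\lceil \kappa/2 \rceil} \le 2^{\kappa/2 + 1}$, which may be pulled out front. Reindexing the second half by $w := \kappa - z_1' \in \{0,1,\ldots,\lfloor \kappa/2 \rfloor - 1\}$ produces a sum of exactly the same shape as the first half, but with $m_{i'}$ in place of $m_i$ in the first binomial.

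By the symmetry $|\mathcal{J}_{i,i'}(z)| = |\mathcal{J}_{i',i}(z)|$, which is immediate from $r(\bsk,\bsl) = r(\bsl,\bsk)$, I may assume without loss of generality that $m_i \ge m_{i'}$. Under this assumption the reindexed second sum is dominated termwise by the first, and since $\lfloor \kappa/2 \rfloor - 1 \le \lceil \kappa/2 \rceil$ the two index ranges combine without loss. Adding the two halves gives at most $2 \cdot 2^{\kappa/2 + 1}$ times the sum displayed in the statement, matching the claimed prefactor $2^{\kappa/2 + 2}$. The hard part will be the algebraic bookkeeping, namely tracking how the three binomials and the dyadic factor transform under $z_1 \mapsto z_1'$ and verifying that the indexing boundaries line up after reindexing; the substantive idea, by contrast, is the trade-off in the min, which saves a factor of $2^{\kappa/2}$ over the naive one-sided bound that would arise from summing $A$ or $B$ alone over the full range.
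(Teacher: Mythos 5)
Your proposal is correct and follows essentially the same route as the paper: the same chain of Lemmas~\ref{lem5neu}, \ref{lem_bound_D} and \ref{lem_bound_R}, the same change of variables $z_1' = z_1 - (m_i - t + 1)$, and the same exploitation of the $\min$ by restricting to $z_1' \le \lceil \kappa/2 \rceil$ so that the dyadic factor never exceeds $2^{\kappa/2+1}$. The only cosmetic difference is that the paper folds the sum via the symmetry of the minimum rather than splitting the range and reindexing as you do, and it too tacitly uses $m_i \ge m_{i'}$ (the only case needed in Lemma~\ref{lem_proof_final}) when it writes the second arm of the minimum with $m_i$ in place of $m_{i'}$, so your explicit reduction to that case matches the paper's implicit one.
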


\begin{proof}
Combining Lemmas~\ref{lem5neu}, \ref{lem_bound_D} and \ref{lem_bound_R} we obtain
\begin{eqnarray*}
\lefteqn{|\mathcal{J}_{i,i'}(z)|}\\
 & \le &   \sum_{z_1=m_i-t+1}^{z-m_{i'} +t-1} \min \\
&& \left\{ 2^{z_1 - m_i + t - 1} {z_1+s-1 \choose s-1} {2z-2z_1-2m_{i'} +t + s \choose s}  {3z-3z_1 - 3m_{i'} +t+ s \choose s},\right. \\
& & \left. 2^{z-z_1-m_{i'}+t-1} {z-z_1+s-1 \choose s-1}{2z_1-2m_{i} +t + s \choose s}  {3z_1 - 3m_{i} +t+ s \choose s}\right\}.
\end{eqnarray*}

To simplify this bound further we first use the change of variable $z=m_i + m_{i'} - 2t + 2 + \kappa$ for $\kappa \ge 0$. Then we have
\begin{eqnarray*}
\lefteqn{|\mathcal{J}_{i,i'}(m_i + m_{i'} -2t+2+\kappa)|}\\
 & \le &   \sum_{z_1'=0}^{\kappa} \min \\
&& \left\{ 2^{z'_1}   {z'_1 + m_i -t +s \choose s-1} {2(\kappa-z'_1) + s+2-t \choose s}  {3 (\kappa-z_1')+s+3-2t \choose s},\right. \\
& & \left. 2^{\kappa-z'_1} {\kappa-z'_1+m_{i'}-t+s \choose s-1}{2z'_1+s +2-t  \choose s}  {3z'_1 + s+3-2t \choose s}\right\}.
\end{eqnarray*}
Let
\begin{eqnarray*}
B(z'_1,z'_2) = 2^{z'_1}   {z'_1 + m_i -t +s \choose s-1} {2z'_2 + s+2-t \choose s}  {3 z_2'+s+3-2t \choose s}.
\end{eqnarray*}
Then we obtain
\begin{eqnarray}\label{bound_sym}
\lefteqn{|\mathcal{J}_{i,i'}(m_i + m_{i'} -2t+2+\kappa)|}\\
& \le & \sum_{z'_1=0}^{\kappa} \min\{B(z'_1,\kappa-z'_1), B(\kappa-z'_1, z'_1)\} \nonumber \\   & \le &  2 \sum_{z'_1=0}^{\lceil \kappa / 2 \rceil} \min\{B(z'_1,\kappa-z'_1), B(\kappa-z'_1, z'_1)\} \nonumber \\  & \le & 2 \sum_{z'_1=0}^{\lceil \kappa/2 \rceil} B(z'_1, \kappa-z'_1) \nonumber \\  & \le & 2^{\kappa/2+2} \sum_{z'_1=0}^{\lceil \kappa/2 \rceil}  {z'_1 + m_i -t +s \choose s-1} \nonumber \\ & & \hspace{2cm} \times {2 (\kappa-z_1')+s+2-t \choose s}  {3 (\kappa-z_1')+s+3-2t \choose s}. \nonumber
\end{eqnarray}
\end{proof}

The following lemma now implies Theorem~\ref{thm0}. Since the proof makes use of Lemma~\ref{lem_J_empty} we need to assume that $\alpha \ge 5$.

\begin{lemma}\label{lem_proof_final}
Let $\alpha \ge 5$. Let $N = 2^{m_1} + 2^{m_2} + \cdots + 2^{m_r} \ge 2$ with $m_1 > m_2 > \ldots > m_r \ge 0$. Then we have
\begin{equation*}
\LL_{2,N}^2(\cS_s) \ll_s \frac{(\log N)^{s-1}}{N} r.
\end{equation*}
\end{lemma}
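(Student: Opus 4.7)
The plan is to combine Lemma~\ref{lem_proof_1} with the off-diagonal cutoff of Lemma~\ref{lem_J_empty} and the polynomial estimate of Lemma~\ref{lem_J_bound}. The key idea is that the extra decay $2^{-|m_i-m_{i'}|/8}$ produced by Lemma~\ref{lem_J_empty} decouples the double sum over $(i,i')$ so that only the diagonal survives, contributing the factor $r$.

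From Lemma~\ref{lem_proof_1} it suffices to bound, for each pair $(i,i')$, the quantity
\[
T_{i,i'} := \frac{2^{m_i+m_{i'}}}{N^2}\sum_{z\ge m_i+m_{i'}-2t+2} \frac{|\mathcal{J}_{i,i'}(z)|}{2^z}.
\]
Fix $(i,i')$ and assume without loss of generality that $m_i \ge m_{i'}$. Setting $z = m_i+m_{i'}-2t+2+\kappa$, Lemma~\ref{lem_J_empty} (where the hypothesis $\alpha \ge 5$ enters) shows that the summand vanishes for $\kappa < \kappa_0 := (m_i-m_{i'})/4+t-5/4$.

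Next I insert the estimate of Lemma~\ref{lem_J_bound}. Bounding each binomial crudely by the appropriate polynomial---${z_1'+m_i-t+s \choose s-1} \ll_s (m_i+z_1'+1)^{s-1}$ and each of the other two binomials by $\ll_s (\kappa - z_1' + 1)^s$---and performing the finite sum over $z_1' \le \lceil \kappa/2 \rceil$ yields
\[
|\mathcal{J}_{i,i'}(m_i+m_{i'}-2t+2+\kappa)| \ll_s 2^{\kappa/2} (m_i+\kappa+1)^{s-1} (\kappa+1)^{2s+1}.
\]
Plugging this in, the $2^{m_i+m_{i'}}$ factor cancels against the $2^{-(m_i+m_{i'}-2t+2+\kappa)}$ in the denominator, leaving a geometric--polynomial series $\sum_{\kappa\ge\kappa_0}(\kappa+1)^{2s+1}/2^{\kappa/2}$ whose mass concentrates at $\kappa = \kappa_0$ and hence is $\ll_s (\kappa_0+1)^{2s+1}/2^{\kappa_0/2}$. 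Together with the elementary splitting $(m_i+\kappa+1)^{s-1} \le 2^{s-1}((m_i+1)^{s-1}+(\kappa+1)^{s-1})$ and the bound $m_i+\kappa_0+1 \ll_s \log N$, this gives
\[
T_{i,i'} \ll_s \frac{(\log N)^{s-1}}{N^2}\cdot\frac{(|m_i-m_{i'}|+1)^{2s+1}}{2^{|m_i-m_{i'}|/8}}.
\]

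Finally, I would sum over $(i,i')$. Because the $m_j$ are pairwise distinct nonnegative integers, for each fixed $i$ and each $d \ge 1$ at most two indices $i'$ satisfy $|m_i-m_{i'}| = d$. Hence
\[
\sum_{i'=1}^r \frac{(|m_i-m_{i'}|+1)^{2s+1}}{2^{|m_i-m_{i'}|/8}} \ll_s \sum_{d \ge 0} \frac{(d+1)^{2s+1}}{2^{d/8}} \ll_s 1,
\]
and summing over $i$ contributes the factor $r$, producing the claim. The main obstacle is the middle step: one must verify that the $\kappa$-sum concentrates at $\kappa = \kappa_0$ despite the $m_i$-dependence inside ${z_1'+m_i-t+s \choose s-1}$. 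The elementary splitting above separates the $m_i$- and $\kappa$-dependences cleanly and is what makes the whole argument go through.
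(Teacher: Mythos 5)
Your proposal is correct and follows essentially the same route as the paper's proof: reduce via Lemma~\ref{lem_proof_1}, use the $\alpha\ge 5$ cutoff $\kappa_0=(m_i-m_{i'})/4+t-5/4$ from Lemma~\ref{lem_J_empty}, bound $|\mathcal{J}_{i,i'}|$ polynomially via Lemma~\ref{lem_J_bound}, and let the resulting geometric decay in $|m_i-m_{i'}|$ collapse the double sum over $(i,i')$ to a single factor of $r$. The only differences are cosmetic (an additive rather than multiplicative splitting of the binomial ${z_1'+m_i-t+s\choose s-1}$, summing over all $i'$ via the ``at most two indices per distance'' observation instead of restricting to $i\le i'$, and the harmless exponent $1/8$ in place of the paper's $1/16$).
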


\begin{proof}
Assume that $i \le i'$. Note that for $\kappa < \lfloor (m_i-m_{i'})/4 +t-5/4 \rfloor$ we have $\mathcal{J}_{i,i'}(m_i+m_{i'}-2t+2+\kappa) = \emptyset$ by Lemma~\ref{lem_J_empty}. Now we use Lemma~\ref{lem_J_bound} to obtain for the innermost sum in Lemma~\ref{lem_proof_1} that % \eqref{def_sum_bound2}
\begin{eqnarray*}
\lefteqn{\sum_{z=m_i+m_{i'} - 2 t+2}^\infty \frac{|\mathcal{J}_{i,i'}(z)|}{2^z} }\\ & \ll_s &  \frac{1}{2^{m_i+m_{i'} }}  \sum_{\kappa= \lfloor (m_i-m_{i'})/4 + t -5/4 \rfloor}^\infty \frac{|\mathcal{J}_{i,i'}(m_i + m_{i'} -2t + 2 + \kappa)|}{2^{\kappa}} \\ & \ll_s & \frac{1}{2^{m_i+m_{i'} }}  \sum_{\kappa=  \lfloor (m_i-m_{i'})/4 + t -5/4 \rfloor}^\infty \frac{2^2}{2^{\kappa/2}} \sum_{z'_1=0}^{\lceil \kappa/2\rceil} {z'_1+m_i-t+s \choose s-1} \\ && \times { 2(\kappa-z'_1)+s+2-t \choose s} {3(\kappa-z'_1) + s + 3-2t \choose s}.
\end{eqnarray*}
Since $t$ depends only on the dimension $s$ but not on $m_i, m_{i'}$, we can simplify the above expression to obtain
\begin{eqnarray*}
\lefteqn{\sum_{z=m_i+m_{i'} - 2 t+2}^\infty \frac{|\mathcal{J}_{i,i'}(z)|}{2^z} } \\ & \ll_s & \frac{1}{2^{m_i+m_{i'} }}  \sum_{\kappa=\lfloor (m_i-m_{i'})/4 \rfloor }^\infty \frac{1}{2^{\kappa/2}} \sum_{z'_1=0}^{\lceil \kappa/2\rceil} {z'_1+m_i \choose s-1}  { 2(\kappa-z'_1) \choose s} {3(\kappa-z'_1) \choose s}.
\end{eqnarray*}
We estimate the binomial coefficients using $0 \le z'_1 \le \kappa$ to obtain
$${z'_1+m_i \choose s-1} \ll_s (m_i+1)^{s-1} (z'_1+1)^{s-1} \ll_s (\log N)^{s-1} (\kappa+1)^{s-1}$$ 
and $${2(\kappa-z'_1) \choose s} {3(\kappa-z'_1) \choose s} \ll_s (\kappa+1)^{2s}.$$
Thus we have
\begin{eqnarray*} 
\sum_{z=m_i+m_{i'} - 2 t+2}^\infty \frac{|\mathcal{J}_{i,i'}(z)|}{2^z}   & \ll_s & \frac{(\log N)^{s-1}}{2^{m_i+m_{i'} }}  \sum_{\kappa= \lfloor (m_i-m_{i'})/4 \rfloor }^\infty \frac{(\kappa+1)^{3s} }{2^{\kappa/2}}.
\end{eqnarray*}
Inserting this bound into Lemma~\ref{lem_proof_1} we obtain
\begin{eqnarray*}
\LL_{2,N}^2(\cS_s) & \ll_s  & \frac{(\log N)^{s-1}}{N^2} \sum_{1 \le i \le i' \le r}  \sum_{\kappa= \lfloor (m_i-m_{i'})/4 \rfloor }^\infty \frac{(\kappa+1)^{3s} }{2^{\kappa/2}}.
\end{eqnarray*}
Using the fact that for $i \le i'$ we have $m_i \ge m_{i'}$ we obtain for any fixed $1 \le i \le r$ that
\begin{eqnarray*}
\sum_{i'=i}^r \sum_{\kappa= \lfloor (m_i-m_{i'})/4 \rfloor }^\infty \frac{(\kappa+1)^{3s} }{2^{\kappa/2}} & \ll_s & \sum_{i'=i}^r \sum_{\kappa= \lfloor (m_i-m_{i'} )/ 4 \rfloor }^\infty \frac{1}{2^{\kappa/4}} \\ & \ll & \sum_{i'=i}^r \frac{1}{2^{(m_i-m_{i'})/16}} \\ & \le & \sum_{q = 0}^\infty \frac{1}{2^{q/16}} \\ & \ll & 1.
\end{eqnarray*}
Thus we obtain $$\sum_{1 \le i \le i' \le r}  \sum_{\kappa= \lfloor (m_i-m_{i'})/4 \rfloor }^\infty \frac{(\kappa+1)^{3s} }{2^{\kappa/2}} \ll_s r$$ and therefore
\begin{eqnarray*}
\LL_{2,N}^2(\cS_s) & \ll_s  & \frac{(\log N)^{s-1}}{N^2} r,
\end{eqnarray*}
where $r=S(N)$ denotes the number of nonzero digits in the binary expansion of $N$.% Thus the result follows by taking the square root.
\end{proof}

\section{The proof of Corollary~\ref{thm0}}

To prove Corollary~\ref{thm0} we first prove a bound on the $\LL_2$ discrepancy of order $3$ digital nets.

\subsection{A bound on the $\LL_2$ discrepancy of order $3$ digital nets}

\begin{theorem}\label{thm_net}
Let $s,m \in \NN$. For every (digitally shifted) order $3$ digital $(t,m,s)$-net $\cP_{2^m,s}$ over $\mathbb{F}_2$ we have
\begin{equation*}
\LL_{2,2^m}(\cP_{2^m,s}) \ll_s \frac{m^{(s-1)/2}}{2^{m-t}}.
\end{equation*}
\end{theorem}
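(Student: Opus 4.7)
The plan is to run the dual-net machinery of Section~3 in the single-net case, which turns out to require only order $3$ rather than order $5$ because the asymmetric emptiness estimate of Lemma~\ref{lem_J_empty} is never needed here.

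Starting from the third part of Lemma~\ref{lem_l2formula}, for a digitally shifted order $3$ digital $(t,m,s)$-net $\cP_{2^m,s}(\bssigma)$ one has
$$\LL^2_{2,2^m}(\cP_{2^m,s}(\bssigma)) = \sum_{(\bsk,\bsl) \in \Dcal^\ast \times \Dcal^\ast} r(\bsk,\bsl) \wal_{\bsk}(\bssigma) \wal_{\bsl}(\bssigma).$$
Taking absolute values, using $|\wal_{\bsk}(\bssigma)|=1$, and the pointwise bound $|r(\bsk,\bsl)| \le 3^{-s} 2^{-\mu(\bsk)-\mu(\bsl)}$ from Lemma~\ref{lem_r} (valid whenever $r(\bsk,\bsl)\ne 0$), then regrouping by $z=\mu(\bsk)+\mu(\bsl)$, I would arrive at
$$\LL^2_{2,2^m}(\cP_{2^m,s}(\bssigma)) \ll_s \sum_{z \ge 2(m-t)+2} \frac{|\Jcal(z)|}{2^z},$$
where $\Jcal(z)=\{(\bsk,\bsl)\in \Dcal^\ast\times \Dcal^\ast : r(\bsk,\bsl)\ne 0,\ \mu(\bsk)+\mu(\bsl)=z\}$. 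The lower summation limit comes from the order~$1$ dual-net inequality $\mu(\bsk) > m-t$ for $\bsk \in \Dcal^\ast$, which holds under our hypothesis thanks to \eqref{eq_t_tprime}.

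Next I would bound $|\Jcal(z)|$. Specialized to $m_i = m_{i'} = m$, Lemmas~\ref{lem_bound_factors}, \ref{lem5neu}, \ref{lem_bound_D}, and \ref{lem_bound_R} go through verbatim and collectively rely only on the order $1$, $2$, $3$ linear-independence properties, i.e., exactly the assumption of the theorem. Writing $z = 2(m-t)+2+\kappa$, the derivation of Lemma~\ref{lem_J_bound} then yields
$$|\Jcal(2(m-t)+2+\kappa)| \ll_s 2^{\kappa/2}\sum_{z_1'=0}^{\lceil \kappa/2\rceil}\binom{z_1'+m-t+s}{s-1}(\kappa+1)^{2s}.$$
Crucially, Lemma~\ref{lem_J_empty} (which required $\alpha\ge 5$) is not needed: for a single net there is no ``cross-block'' asymmetry to exploit, so we simply start the geometric sum at $z = 2(m-t)+2$.

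Inserting this bound and absorbing constants depending only on $s$ and $t$, I arrive at
$$\LL^2_{2,2^m}(\cP_{2^m,s}(\bssigma)) \ll_s \frac{1}{2^{2(m-t)}}\sum_{\kappa\ge 0}\frac{(\kappa+1)^{2s}}{2^{\kappa/2}}\sum_{z_1'=0}^{\lceil \kappa/2\rceil}(z_1'+m)^{s-1}.$$
To extract the $m^{s-1}$ factor I would split the $\kappa$-sum at $\kappa=m$: for $\kappa\le m$, the inner sum is $\ll_s (\kappa+1)m^{s-1}$ and the outer geometric series converges; for $\kappa>m$, a fractional geometric decay $2^{-\kappa/4}$ absorbs any polynomial in $\kappa$ and the total contribution is $O_s(m^{s-1}2^{-m/4})$, which is negligible. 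Combining these estimates gives $\LL^2_{2,2^m}(\cP_{2^m,s}(\bssigma)) \ll_s m^{s-1}/2^{2(m-t)}$, and taking square roots yields the theorem. The main effort is the bookkeeping in the chain Lemmas~\ref{lem_bound_factors}--\ref{lem_J_bound}, but since this has already been performed in the sequence case, no new technical obstacle appears; the proof is genuinely simpler than Lemma~\ref{lem_proof_final} because there is no outer sum over index pairs $(i,i')$.
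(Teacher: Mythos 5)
Your proposal is correct and follows essentially the same route as the paper: the paper likewise specializes the sequence argument to the single-net case $r=1$, starts from the third part of Lemma~\ref{lem_l2formula}, bounds $|r(\bsk,\bsl)|$ by $3^{-s}2^{-\mu(\bsk)-\mu(\bsl)}$, invokes Lemma~\ref{lem_J_bound} with $m_i=m_{i'}=m$, and observes that only Lemma~\ref{lem_J_empty} required $\alpha\ge 5$, so order $3$ suffices. Your final summation (splitting at $\kappa=m$) is only a cosmetic variation of the paper's direct polynomial-times-geometric estimate.
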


\begin{proof}
The proof of Theorem~\ref{thm_net} can be obtained by specializing the proof of Theorem~\ref{thm0} to the case where $r=1$. In the following we describe the necessary changes in the proof of Theorem~\ref{thm0} to obtain the result. The reason for requiring only $\alpha =3$ instead of $\alpha \ge 5$ is that we do not make use of Lemma~\ref{lem_J_empty} in this proof.

Let $C_1,\ldots, C_s \in \mathbb{F}_2^{3 m \times m}$ be the generating matrices of $\cP_{2^ms}$ and recall the definition
\begin{equation*}
\mathcal{D} = \{\boldsymbol{k} \in \mathbb{N}_0^s: C_1^\top \vec{k}_1 + \cdots + C_s^\top \vec{k}_s \equiv 0 \pmod{2}\}
\end{equation*}
and $\mathcal{D}^\ast = \mathcal{D} \setminus \{\boldsymbol{0}\}$. We can use the same argument as in the proof of Theorem~\ref{thm0} where $r=1$. Let $\Jcal=\Jcal_{i,i'}$ and $\mathcal{J}(z)=\Jcal_{i,i'}(z)$ from the proof of Theorem~\ref{thm1} with $i=i'$ and $m_i=m$, i.e., by \eqref{def_Jii} and \eqref{def_Jii_z} we have
\begin{equation*}
\Jcal = \{(\bsk,\bsl) \in \Dcal^\ast \times \Dcal^\ast \ : \ r(\bsk,\bsl) \neq 0\}
\end{equation*}
and
\begin{equation*}
\mathcal{J}(z) = \{(\boldsymbol{k},\boldsymbol{l}) \in \mathcal{J} \ : \ \mu(\boldsymbol{k}) + \mu(\boldsymbol{l}) = z\}.
\end{equation*}

By the same arguments as in the proof of Theorem~\ref{thm0}, see \eqref{def_sum_bound}, we have
\begin{equation*}
\LL_{2,2^m}^2(\cP_{2^m,s}) = \left| \sum_{\bsk,\bsl \in \Dcal^\ast} r(\bsk,\bsl) \wal_{\bsk}(\bssigma) \wal_{\bsl}(\bssigma) \right| \le \sum_{\bsk,\bsl \in \Dcal^\ast} |r(\bsk,\bsl)| = \sum_{(\bsk,\bsl) \in \Jcal} |r(\bsk,\bsl)|
\end{equation*}
and for $(\bsk,\bsl) \in \Jcal$ we have $$|r(\bsk,\bsl)| \le \frac{1}{3^s 2^{\mu(\bsk) + \mu(\bsl)}}.$$  Thus we have (cf. \eqref{def_sum_bound2}) $$\LL_{2,2^m}^2(\cP) \ll_s  \sum_{(\bsk,\bsl) \in \Jcal} \frac{1}{2^{\mu(\bsk) + \mu(\bsl)}}.$$

It follows from the (order $1$) digital $(t,m,s)$-net property and  $\bsk \in \Dcal^\ast$ that $\mu(\boldsymbol{k}) > m-t$ and from $\bsl \in \Dcal^\ast$ it follows also that $\mu(\boldsymbol{l}) > m-t$ and hence $\mu(\bsk) + \mu(\bsl) \ge 2(m-t+1)$. Therefore (cf. Lemma~\ref{lem_proof_1}) 
\begin{equation}\label{bdL2net1}
\LL_{2,2^m}^2(\cP) \ll_s  \sum_{z=2(m-t+1)}^\infty \frac{|\Jcal(z)|}{2^z}.
\end{equation}

From Lemma~\ref{lem_J_bound}, we obtain
for $z=2m-2t + 2 + \kappa$ for $\kappa \ge 0$ that

\begin{eqnarray*}
|\mathcal{J}(2m-2t+2+\kappa)| & \le &  2^{\kappa/2+2} \sum_{z'_1=0}^{\lceil \kappa/2 \rceil}  {z'_1 + m-t +s \choose s-1} \\ & & \times {2(\kappa-z'_1) +s+2-t \choose s}  {3(\kappa-z'_1)+s+3-2t \choose s}.
\end{eqnarray*}

Inserting this result into \eqref{bdL2net1} we obtain
\begin{eqnarray*}
\LL_{2,2^m}^2(\cP_{2^m,s})  & \ll_s  & \frac{1}{2^{2m-2t+2}}  \sum_{\kappa=0}^\infty \frac{|\mathcal{J}(2m-2t+2+\kappa)|}{2^{\kappa}} \\ & \le & \frac{1}{2^{2m-2t}}  \sum_{\kappa=0}^\infty \frac{1}{2^{\kappa/2}} \sum_{z'_1=0}^{\lceil \kappa/2 \rceil}  {z'_1 + m-t +s \choose s-1} \\ & & \hspace{1cm}\times {2(\kappa-z'_1) +s+2-t \choose s}  {3(\kappa-z'_1)+s+3-2t \choose s} \\ & \le  &  \frac{1}{2^{2m-2t}}  \sum_{\kappa=0}^\infty \frac{\kappa/2+1}{2^{\kappa/2}}  \frac{(\kappa + 1 + m-t +s)^{s-1}}{(s-1)!} \\ & & \hspace{1cm} \times\frac{(2\kappa +s+2-t)^s}{s!}  \frac{(3\kappa+s+3-2t)^s}{s!}.
\end{eqnarray*}
Since the sum over $\kappa$ is now from $0$ to $\infty$, we do not need to use Lemma~\ref{lem_J_empty}. Thus also the assumption that $\alpha \ge 5$ is not needed and $\alpha = 3$ is sufficient.

Using the fact that $t$ depends only on the dimension $s$, we therefore obtain
\begin{eqnarray*}
\LL_{2,2^m}^2(\cP_{2^m,s}) \ll_s \frac{m^{s-1}}{2^{2m}}  \sum_{\kappa=0}^\infty \frac{\kappa^{3s}}{2^{\kappa/2}}   \ll_s  \frac{m^{s-1}}{2^{2m}}.
\end{eqnarray*}
Thus the result follows by taking the square root.
\end{proof}

\subsection{The proof of Corollary~\ref{thm0}}

The proof of Corollary~\ref{thm0} uses Theorem~\ref{thm_net} and an idea of \cite{CS02}. %The explicit construction of the point sets with optimal $\LL_2$ discrepancy with respect to the order of magnitude in $N \ge 2$ is outlined in the proof below.

\begin{proof}
For an integer $N \ge 2$ we choose $m \in \NN$ such that $2^{m-1} < N \le 2^{m}$. Let $\cP_{2^m,s}$ be an order $3$ digital $(t,m,s)$-net over $\mathbb{F}_2$ with the property that the first component of $\cP_{2^m,s}$ is a $(0,m,1)$-net over $\mathbb{F}_2$. Note that such nets exist for every $m$ and can be obtained in the following way: Take the digital sequence introduced in Section~\ref{sec_construction} in dimension $3s-1$. Concatenate to the $n$th element the component $n 2^{-m}$ for $n=0,1,\ldots,2^m-1$, so that the new points are of the form $(n2^{-m}, y_{n,1}, y_{n,2},\ldots, y_{n,3s-1})$, where $(y_{n,1},\ldots, y_{n,3s-1})$ is the $n$th point of the sequence. Then the set consisting of the points $(n2^{-m},y_{n,1}, y_{n,2})$ for $0 \le n < 2^m$ is a digital $(0,m,3)$-net. Apply the digit interlacing composition to the point set $$\{(n2^{-m}, y_{n,1}, y_{n,2},\ldots, y_{n,3s-1})\, : \ n=0,1,\ldots,2^m -1\}.$$

We can now use \cite[Proposition~1]{D10}, which states the following: Let $C_1,\ldots, C_{\alpha s}$ be the generating matrices of a digital $(t,m,s)$-net. Let $C^{(\alpha)}_1, \ldots, C^{(\alpha)}_s$ be the matrices obtained by applying the interlacing construction to $C_1,\ldots, C_{\alpha s}$. Then $C^{(\alpha)}_1,\ldots, C^{(\alpha)}_s$ are generating matrices of an order 3 digital $(t,m,s)$-net. In particular, it follows that the first component of the order $3$ digital net obtained this way is a digital $(0,m,1)$-net. 

We now proceed as in \cite{CS02}. According to Theorem~\ref{thm_net} we have
\begin{equation}\label{bdl2discnet2}
\mathcal{L}_{2,2^m}(\cP_{2^m,s}) \ll_s \frac{m^{(s-1)/2}}{2^{m}}.
\end{equation}
As shown above, the first component of $\cP_{2^m,s}$ is a digital
$(0,m,1)$-net over $\FF_2$. Hence the subset
$$\widetilde{\cP}_{N,s}:=\cP_{2^m,s} \cap
\left(\left[0,\frac{N}{2^m}\right) \times [0,1)^{s-1}\right)$$
contains exactly $N$ points. We define the point set
$$\cP_{N,s}:=\left\{\left(\frac{2^m}{N} x_1,x_2,\ldots,x_s\right)\, :
\, (x_1,x_2,\ldots,x_s) \in \widetilde{\cP}_{N,s}\right\}.$$ Then we
have (where $\bsy=(y_1,\ldots,y_s)$)
\begin{eqnarray*}
\lefteqn{(N \LL_{2,N}(\cP_{N,s}))^2  =  \int_{[0,1]^s} \left|A\left([\bszero,\bsy),N,\cP_{N,s})\right)-N \lambda_s([\bszero,\bsy)) \right|^2 \rd \bsy}\\
& = &  \int_0^1 \cdots \int_0^1 \left|A\left(\left[0,y_1 N2^{-m}\right) \times \prod_{i=2}^s[0,y_i),N,\widetilde{\cP}_{N,s}\right)\right.\\
&& \hspace{3cm}\mbox{}-2^m \left. \frac{N}{2^m} y_1\cdots y_s \right|^2 \rd y_1\cdots \rd y_s\\
& = & \frac{2^m}{N} \int_0^{N/2^m}\int_0^1 \cdots \int_0^1\left|A([\bszero,\bsy),N,\widetilde{\cP}_{N,s})-2^m \lambda_s([\bszero,\bsy)) \right|^2 \rd \bsy\\
& = & \frac{2^m}{N} \int_0^{N/2^m}\int_0^1 \cdots \int_0^1\left|A([\bszero,\bsy),2^m,\cP_{2^m,s})-2^m \lambda_s([\bszero,\bsy)) \right|^2 \rd \bsy\\
& \le & \frac{2^m}{N} \left(2^m \LL_{2,2^m}(\cP_{2^m,s})\right)^2.
\end{eqnarray*}
With \eqref{bdl2discnet2} we obtain
\begin{eqnarray*}
(N \LL_{2,N}(\cP_{N,s}))^2  \ll_s \frac{2^m}{N} m^{s-1} \ll_s  (\log N)^{s-1}.
\end{eqnarray*}
Taking the square root and dividing by $N$ we finally obtain $$\LL_{2,N}(\cP_{N,s}) \ll_s  \frac{\left(\log N\right)^{(s-1)/2}}{N}.$$
\end{proof}

\section{Acknowledgements}

H. Niederreiter also independently suggested recently that higher order nets may achieve the optimal rate of convergence of the $\mathcal{L}_2$ discrepancy.

%\vspace{1cm}
\noindent{\bf Author's Addresses:}

\noindent Josef Dick, School of Mathematics and Statistics, The University of New South Wales, Sydney, NSW 2052, Australia.  Email: josef.dick@unsw.edu.au \\

\noindent Friedrich Pillichshammer, Institut f\"{u}r Analysis, Universit\"{a}t Linz, Altenbergerstra{\ss}e 69, A-4040 Linz, Austria. Email: friedrich.pillichshammer@jku.at

\end{document}